\newtheorem{theorem}{Theorem}[section]
\newtheorem{lemma}[theorem]{Lemma}
\newtheorem{prop}[theorem]{Proposition}
\theoremstyle{definition}
\newtheorem{definition}[theorem]{Definition}
\newtheorem{remark}[theorem]{Remark}
\newtheorem*{ack}{Acknowledgment}
\def\id{{\rm id}}
\def\Prim{{\rm Prim}}
\def\gr{{\rm gr}}
\long\def\comment#1\endcomment{}
\def\bbZ{{\mathbb Z}}
\def\bbQ{{\mathbb Q}}
\def\bbK{{\mathbb K}}
\def\bbC{{\mathbb C}}
\def\bbL{{\mathbb L}}
\def\bbT{{\mathbb T}}
\def\bbF{{\mathbb F}}
\def\Aut{{\rm Aut}}
\def\AA{{\mathcal A}}
\def\BB{{\mathcal B}}
\def\LL{{\mathcal L}}
\def\PP{{\mathcal P}}
\def\PB{{\mathcal P\mathcal B}}
\def\BP{{\mathcal B\mathcal P}}
\def\BC{{\mathcal B\mathcal C}}
\def\OO{{\mathcal O}}
\def\UU{{\mathcal U}}
\def\Dd{Drinfel'd }
\def\ass{ associator }
\def\semass{ semi-associator }
\def\kab{ \bbK \langle \langle A,B \rangle \rangle}
\def\Alg{\widehat{\bbL}^{>1} \langle A,B \rangle}
\def\semk{\AA _n \rtimes \bbK [\Sigma _n]}
\def\semd{\AA _2 \rtimes \bbK [\Sigma _2]}
\def\sem3{\AA _3 \rtimes \bbK [\Sigma _3]}
\def\expsem{\exp\PP _n \rtimes \Sigma _n }
\def\a{\alpha }
\begin{document}

\title[Universal representations of braidlike groups]
{Universal representations of braid and braid-permutation groups}
      
\author[B. Berceanu]{Barbu Berceanu$^*$}
\address{Inst.~of Math.~Simion Stoilow, 
P.O. Box 1-764,
RO-014700 Bucharest, Romania}
\email{Barbu.Berceanu@imar.ro}

\author[\c S. Papadima]{\c Stefan Papadima$^*$}
\address{Inst.~of Math.~Simion Stoilow, 
P.O. Box 1-764,
RO-014700 Bucharest, Romania}
\email{Stefan.Papadima@imar.ro}

\thanks{$^*$Work partially supported by the CERES Programme of the
Romanian Ministry of Education and Research, contract 4-147/2004.}

\subjclass[2000]{Primary
20F36, 20F38.
Secondary
57M27.
}

\keywords{braid group, braid-permutation group, universal representation, associator,
welded braid, finite type invariant.}

\begin{abstract} 
Drinfel'd used associators to construct families of universal representations of
braid groups. We consider semi-associators (i.e., we drop the pentagonal axiom
and impose a normalization in degree one). We show that the process may be reversed, 
to obtain semi-associators from universal representations of $3$--braids.
We view braid groups as subgroups of braid-permutation groups. We construct a family 
of universal representations of braid-permutation groups, without using associators.
All representations in the family are faithful, defined over $\bbQ$ by simple explicit 
formulae. We show that they give universal Vassiliev--type invariants for
braid-permutation groups.
\end{abstract}

\maketitle

\section{Introduction}
\label{intro}

\subsection{}
\label{ss11}

In the foundational paper \cite{Drin}, \Dd  introduced and proved the existence of
associators, that is, formal series in two noncommutative variables with
coefficients in a characteristic zero field $\bbK$, satifying certain axioms.
These objects constitute the core structure leading to many important results.
They play a key role in the quantization of universal enveloping algebras.
There is a deep connection between associators and the absolute Galois group.
They appear in an essential way in the construction of universal finite type
invariants in low-dimensional topology. See \cite{Drin}, and also 
Birman's survey \cite{B} and the monograph \cite{Kas} by Kassel.

\subsection{}
\label{ss12}

One also finds in \cite{Drin} a bridge between associators and 
{\em universal representations} of Artin braid groups into braid algebras.
These algebras (defined over $\bbZ$) are semidirect products of symmetric group algebras 
$\bbK [\Sigma_n]$
and infinitesimal Artin algebras $\AA_n$ (alias 
complete algebras of horizontal chord diagrams on $n$ strings). Given an associator, \Dd
constructs a family of so-called universal representations of the braid groups into 
the corresponding braid algebras, satisfying certain natural properties. See sections
\ref{not} and \ref{sec:kaz} for details.

We consider in this note semi-associators, i.e., associators not required to verify the 
pentagonal axiom, which are still normalized in degree one; see Definition \ref{def:semiass}.

\begin{theorem}
\label{thm:main}
There is a natural bijection between semi-associators and universal representations of
$3$-braids.
\end{theorem}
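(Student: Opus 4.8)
The plan is to realise the bijection explicitly through a \Dd-type substitution and to show that on each side the defining axioms correspond term for term. Write $B_3=\langle \sigma_1,\sigma_2\mid \sigma_1\sigma_2\sigma_1=\sigma_2\sigma_1\sigma_2\rangle$ and let $s_1,s_2\in\Sigma_3$ be the two elementary transpositions. The structural fact I would rely on is that the infinitesimal algebra $\PP_3$ splits as the completed free Lie algebra on $t_{12},t_{23}$ together with the central line spanned by $t_{12}+t_{13}+t_{23}$; this is exactly what allows a group-like two-variable series $\hi\in\kab$ to be evaluated as $\hi=\hi(t_{12},t_{23})$ inside the group $\semexp$, and it is also the reason that $3$-braids, rather than $n$-braids, are the right object to pair with two-variable (semi-)associators.

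For the forward map, given a semi-associator $\hi$ I would set
$$
\rho_\hi(\sigma_1)=s_1\exp(t_{12}/2),\qquad
\rho_\hi(\sigma_2)=\hi(t_{12},t_{23})^{-1}\,s_2\exp(t_{23}/2)\,\hi(t_{12},t_{23}),
$$
and check that the single braid relation $\rho_\hi(\sigma_1)\rho_\hi(\sigma_2)\rho_\hi(\sigma_1)=\rho_\hi(\sigma_2)\rho_\hi(\sigma_1)\rho_\hi(\sigma_2)$ is satisfied in $\semexp$. Pushing the transpositions through the exponentials (using that $s_i$ permutes the indices of the $t_{jk}$ and fixes $t_{i,i+1}$) converts this identity into the hexagon relation(s) of Definition \ref{def:semiass}, so $\rho_\hi$ is a well-defined homomorphism; the degree-one normalization of $\hi$ makes its leading term the prescribed $s_i\exp(t_{i,i+1}/2)$, whence $\rho_\hi$ is universal in the sense of Section \ref{not}. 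This is the classical \Dd verification and I expect only bookkeeping with the $s_i$-twists here.

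The substance lies in the reverse map. Given a universal representation $\rho$, the degree-one normalization pins its leading behaviour, and I would first reduce $\rho(\sigma_1)$ to the standard form $s_1\exp(t_{12}/2)$ (exploiting that pro-unipotent square roots are unique and, if necessary, the residual conjugation gauge by $\Sigma_3$-invariants). Writing $\rho(\sigma_2)=s_2\,P$ with $P\in\exp\PP_3$ group-like and of leading term $\exp(t_{23}/2)$, the task becomes to solve
$$
P=\hi(t_{13},t_{23})^{-1}\exp(t_{23}/2)\,\hi(t_{12},t_{23})
$$
for a group-like two-variable series $\hi$ with the correct normalization, where $t_{13}=-t_{12}-t_{23}$ modulo the centre. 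I would do this by induction on the filtration degree in the completed algebra: at each degree the equation is affine in the unknown coefficient of $\hi$ and has a unique solution, and the splitting of $\PP_3$ guarantees that the solution involves only $t_{12},t_{23}$ and is group-like. Feeding the resulting $\hi$ back through the braid relation satisfied by $\rho$ then yields exactly the hexagon relation(s), and crucially nothing stronger: since $B_3$ carries a single defining relation, the pentagon axiom (which lives on four strands) is never forced, so $\hi$ is a genuine semi-associator rather than a full associator.

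Finally I would verify that the two assignments are mutually inverse --- immediate once the substitution formula and the uniqueness in the inductive solution are in hand --- and that the correspondence is natural, i.e.\ compatible with extension of the ground field $\bbK$ and with the evident symmetries. The main obstacle I anticipate is the reverse direction: establishing that the defining equation for $\hi$ has a unique normalized group-like solution, and confirming that the gauge freedom in a universal representation is exactly absorbed by this normalization so that the map is a genuine bijection and not merely a surjection. Everything else is the standard hexagon-versus-braid-relation dictionary.
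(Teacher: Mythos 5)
Your forward direction is essentially the paper's (it is Drinfel'd's formula \eqref{eq:kzrep}, and Remark \ref{rem:drincomp} confirms this is the representation attached to a semi-associator; you are also right that the pentagon is invisible on three strands). The reverse direction, however, contains a step that fails. You propose to recover $\hi$ from $P$ by solving $P=\hi(t_{13},t_{23})^{-1}\exp(t_{23}/2)\,\hi(t_{12},t_{23})$ degree by degree, claiming the degree-$d$ equation is affine with a unique solution. It is affine, but the relevant linear map, $\hi_d \mapsto \hi_d(t_{12},t_{23})-\hi_d(t_{13},t_{23})$, is neither injective nor surjective: writing $A=t_{12}$, $B=t_{23}$ and $t_{13}=2T-A-B$ with $T$ central, the degree-$3$ element $2[A,[A,B]]+[B,[A,B]]$ lies in the kernel (one checks $\delta(2T-A-B,B)=\delta(A,B)$ for this $\delta$), while the image in degree $3$ is only the line $\bbK\cdot [B,[A,B]]$. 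So at each stage you have neither uniqueness nor, for an arbitrary normalized $P$, existence: solvability of your equation is not a formal fact but already encodes part of the braid relation, and the plan ``first solve for $\hi$, then read off the hexagon'' collapses. The paper sidesteps this entirely by parametrizing through the Garside element $\Delta=\sigma_1\sigma_2\sigma_1$ of presentation \eqref{eq:gpres}: writing $\rho\Delta=\exp(T)\cdot\Phi^{-1}\otimes 321$ recovers $\Phi$ by mere group translation, with no equation to solve; the normalization (N2) forces $\log\Phi\equiv_2 0$ (Lemma \ref{lem:norm}), the splitting $\PP_3=\bbK\cdot T\times\widehat{\bbL}\langle A,B\rangle$ then gives uniquely $\Phi=\Psi_t$ with $\Psi=\exp(\psi)$, $\psi\in\Alg$, and (YB) is shown to be literally equivalent to (H3) (Lemma \ref{lem:final}).

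A second gap: you never verify axiom (AS), which is part of the definition of a semi-associator and does not come out of the hexagon bookkeeping. In the Drinfel'd-form parametrization you chose, the identity extracted from the braid relation is \eqref{eq:c4}, which coincides with (H3) only after using ${}^{321}\Phi=\Phi^{-1}$, i.e.\ (AS) itself (Remark \ref{rem:drincomp}). The paper derives (AS) from the centrality of $\Delta^2$: since $\rho\a_{ij}=1+\mu_{ij}$ with $\mu_{ij}\equiv_2 t_{ij}$, Lemma \ref{l12} shows these elements topologically generate $\AA_3$, so $\rho\Delta^2$ is central in $\AA_3$, hence equals $\exp(2T)$, which yields ${}^{321}\Phi=\Phi^{-1}$ (Lemma \ref{lem:ybas}); your argument needs some such step. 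Finally, a smaller point: no ``gauge'' or square-root argument is needed (or available) to normalize $\rho(\sigma_1)$ --- condition (S) of Definition \ref{def:urepb} places it in $\exp\PP_2\rtimes\Sigma_2$, where $\PP_2=\bbK\cdot t_{12}$, so (E), $(\Sigma)$ and (N) pin it to $\exp(t_{12}/2)\otimes s_1$ exactly (Lemma \ref{thm:dim2}); invoking a residual conjugation gauge would moreover turn the claimed bijection into one between gauge classes of representations, which is weaker than the statement.
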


In the above bijection, the universal $3$--representation coming from an associator
coincides with the one constructed in \cite{Drin}; see Theorem \ref{thm:dim3} and 
Remark \ref{rem:drincomp}. In other words, the \Dd approach may also be used to obtain
semi-associators from universal representations of $3$--braids.

\subsection{}
\label{ss13}

The \Dd representations from \cite{Drin} are faithful, as follows from work by Kohno 
\cite{K2}. There is however a practical inconvenient. Known explicit formulae for
$\bbC$--associators involve complicated multiple zeta values, see for instance
\cite[Ch. XIX]{Kas}. Over $\bbQ$, there is no example of explicitly described associator,
to our best knowledge. To remedy this, we view Artin braid groups $\BB_n$ inside the
{\em welded braid groups} $\BP_n$ introduced and studied by Fenn--Rimanyi--Rourke \cite{FRR}
(also known as {\em braid-permutation groups}).

We propose analogs of braid algebras (also defined over $\bbZ$), in this enlarged context,
namely {\em oriented braid algebras}; see Definitions \ref{def:braidoralg} and \ref{def:inforalg}.
These are semidirect product algebras, $\OO_n \rtimes \bbQ [\Sigma_n]$, where the
{\em oriented Artin algebra} $\OO_n$ is a cousin of $\AA_n$, obtained from {\em oriented}
horizontal chord diagrams. There is also a natural notion of universal family of
representations, of either welded or Artin braid groups into oriented braid algebras;
this notion is defined by conditions \eqref{n1} and \eqref{n3} from Theorem \ref{thm:newuniv}.
Our next goal is to point out similarities between universal representations of braid and
braid-permutation groups, into braid algebras (respectively oriented braid algebras), 
emphasizing the fact that the latter exhibit simpler qualitative properties.
We prove in Theorem \ref{thm:newuniv} the following.

\begin{theorem}
\label{thm:intro2}
There is a universal family of faithful representations, for both welded and Artin braid groups,
into oriented braid algebras. These representations are defined by explicit formulae,
over $\bbQ$.
\end{theorem}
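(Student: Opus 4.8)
The plan is to construct explicit representations $\rho_n \colon \BP_n \to \OO_n \rtimes \bbQ[\Sigma_n]$ (which will restrict to representations of $\BB_n$) and then verify the required properties: the universality conditions \eqref{n1} and \eqref{n3}, the rationality and explicitness (which will be automatic from the construction), and finally faithfulness.

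First I would recall the presentation of the welded braid group $\BP_n$ from Fenn--Rimanyi--Rourke: it is generated by the usual braid generators $\sigma_i$ together with the symmetric-group generators $s_i$ (transpositions), subject to the braid relations, the symmetric-group relations, and the mixed relations that govern how the $\sigma_i$ and $s_i$ interact. Because $\OO_n \rtimes \bbQ[\Sigma_n]$ is a semidirect product, the natural thing is to send each permutation generator $s_i$ to its image $s_i \in \bbQ[\Sigma_n]$ and to send each braid generator $\sigma_i$ to $s_i \cdot \exp(t_{i,i+1})$ (or a similarly normalized group-like element), where $t_{i,i+1}$ is the degree-one oriented chord diagram connecting strands $i$ and $i+1$. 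This is the exponential ansatz underlying Drinfel'd's construction, but now for the \emph{oriented} Artin algebra, where — as suggested by the phrase ``simpler qualitative properties'' — the infinitesimal relations in $\OO_n$ should be mild enough that no associator correction term is needed.

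The key step is then to check that this assignment respects all the defining relations of $\BP_n$. The symmetric-group relations and the braid relations among the $s_i$ are immediate. For the braid relations among the $\sigma_i$ and the mixed braid--permutation relations, one reduces — via the semidirect-product structure — to identities among the exponentials $\exp(t_{i,i+1})$ and their $\Sigma_n$-conjugates; by the standard argument these follow from the infinitesimal (oriented) relations defining $\OO_n$, precisely the relations built into Definition~\ref{def:inforalg} to make them hold. Once well-definedness is established, conditions \eqref{n1} and \eqref{n3} from Theorem~\ref{thm:newuniv} — which encode the normalization in degree one and the compatibility with the symmetric-group projection — should hold by inspection of the leading terms of $\rho_n(\sigma_i) = s_i(1 + t_{i,i+1} + \cdots)$.

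The main obstacle I expect is \textbf{faithfulness}. Injectivity of an exponential-type representation is delicate and typically does not follow formally; for ordinary braids Drinfel'd's faithfulness came via Kohno's monodromy results, as the paper itself notes. Here I would try to exploit the richer structure of $\BP_n$: the representation $\rho_n$ carries a natural filtration (by chord-diagram degree), and I would compare the associated graded map with the known structure of the lower central series or the Malcev completion of $\BP_n$. Concretely, the plan is to show that $\rho_n$ is injective on the pure welded braid subgroup by identifying the associated graded of $\rho_n$ with a known faithful infinitesimal model (the holonomy Lie algebra of the orbit configuration space, or the analogue of the Kohno--Drinfel'd algebra in the oriented setting), and then propagate faithfulness from the pure subgroup to all of $\BP_n$ using the short exact sequence $1 \to \PB_n \to \BP_n \to \Sigma_n \to 1$ together with the fact that $\rho_n$ visibly separates permutations via the $\bbQ[\Sigma_n]$ factor. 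If the oriented relations are as simple as the introduction promises, this comparison should be considerably more transparent than in the classical braid case, which is exactly the qualitative advantage being advertised.
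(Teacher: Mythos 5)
Your construction is essentially the paper's: the paper also sends the degree--one generators exponentially into the oriented braid algebra, the only difference being that it works with McCool's presentation \eqref{eq:relmc} of the kernel $\BC_n$ of $\BP_n\to\Sigma_n$ rather than with the full Fenn--Rimanyi--Rourke presentation. Concretely, it sets $R_n(a_{ij})=\exp(v_{ij})$ as in \eqref{eq:defr}, observes that the McCool relations are formally identical to the defining relations of $\OO_n$ (so well--definedness is immediate), and extends to $\BP_n=\BC_n\rtimes\Sigma_n$ by \eqref{eq:equiv}; your generator $\sigma_i\mapsto s_i\exp(v_{i,i+1})$ then agrees with $R_n\otimes\id(\sigma_i)=\exp(v_{i,i+1})\otimes s_i$ via \eqref{eq:sigmaas}. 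Your route through the $\sigma_i$/$s_i$ relations would also work --- e.g.\ the braid relation reduces, after conjugating by permutations, to
\[
\exp(v_{i,i+1})\exp(v_{i,i+2})\exp(v_{i+1,i+2})=\exp(v_{i+1,i+2})\exp(v_{i,i+2})\exp(v_{i,i+1})\, ,
\]
which follows from relations (I) and (II) of Definition \ref{def:inforalg} --- but the McCool route makes these verifications trivial. Two notational slips: the oriented generators are $v_{ij}$ (with $v_{ij}\neq v_{ji}$), not $t_{i,i+1}$; and the kernel of $\BP_n\to\Sigma_n$ is $\BC_n$, not $\PB_n$ (the pure braid group is a proper subgroup of $\BC_n$).

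The genuine gap is in your faithfulness argument. Your plan --- show that the associated graded of $\rho_n$ matches a ``known faithful infinitesimal model'' --- cannot close the argument, for two reasons. First, no such model was previously known: identifying $\gr_{\Gamma}^*(\BC_n)\otimes\bbQ$ with $\LL_n^*$ and showing $R_n$ is a Malcev completion is itself new content (Theorem \ref{thm:mcformal}, proved via the $1$--formality of $\BC_n$, extracted directly from the presentation \eqref{eq:relmc}); there is no orbit configuration space holonomy Lie algebra to fall back on here. Second, and more fundamentally, even granting that $R_n$ is a Malcev completion (i.e.\ that $\gr^*(R_n)$ is an isomorphism), this does \emph{not} imply injectivity of $R_n$: the kernel of a Malcev completion consists exactly of the elements killed in every torsion--free nilpotent quotient, so ``associated graded is an isomorphism'' only detects elements separated rationally by the lower central series. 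The missing ingredient is the \emph{residual torsion--free nilpotence} of $\BC_n$, which the paper supplies via Hain's result on Torelli groups (Proposition \ref{prop:tore}): $\BC_n$ sits inside the Torelli group $T_{\bbF_n}$ of the free group, $\bbF_n$ satisfies $\cap_k\Gamma_k\bbF_n=\{1\}$ with torsion--free associated graded, and the proposition (whose proof embeds the Torelli filtration quotients into derivations of $\gr_{\Gamma}^*(\bbF_n)$) yields the residual property. Only then does a Malcev completion of the finitely generated group $\BC_n$ become injective; your final step --- propagating faithfulness from the kernel to all of $\BP_n$ using the $\Sigma_n$--projection --- is fine and is exactly how the paper concludes.
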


The key point in the construction of associators and \Dd representations is the
analysis of the corresponding KZ--monodromy. The required flatness of KZ connection
forms is intimately related to the $1$--{\em formality} property of pure braid groups,
in the sense of D. Sullivan \cite{S}; see Section \ref{sec:mccool} for details.

The analogs of pure braid groups, in the context of welded braids, are McCool's groups from
\cite{MC}. Our key step in proving Theorem \ref{thm:intro2} is to show that the
McCool groups are $1$--formal, and to compute their rational Lie algebras associated to
the lower central series filtration. This is done in Theorem \ref{thm:mcformal}. 
See also Cohen--Pakianathan--Vershinin--Wu \cite{CPVW} for related results, in
particular for the determination of the integral associated graded Lie algebra of
upper-triangular McCool groups. New information on upper-triangular McCool groups
(which are proper subgroups of McCool groups)
may be found in Remark \ref{rem:upper}.

\subsection{}
\label{ss14}

It is well-known that \Dd representations have the following geometric interpretation,
see for instance \cite[Section 1]{Pa4}. Consider on rational group rings of Artin braid groups
the multiplicative {\em Vassiliev filtration}, obtained by resolving singularities 
of singular braids. On braid algebras, there is the natural multiplicative filtration
coming from the complete filtration of infinitesimal Artin algebras $\AA_n$. \Dd
representations are {\em universal finite type invariants} for the corresponding Artin braid groups.
This means that their canonical extensions to group rings respect the above filtrations,
and induce a multiplicative isomorphism at the associated graded level.

Our explicit representations of welded braid groups into oriented braid algebras from
Theorem \ref{thm:intro2} have the same geometric flavour. On rational group rings of
welded braids, we consider the multiplicative filtration obtained by resolving 
singularities (welds), as explained in \S \ref{ss62}. The complete filtration of
oriented Artin algebras $\OO_n$ naturally induces another multiplicative filtration,
on oriented braid algebras. The result below is proved in Section \ref{sec:new}.

\begin{theorem}
\label{thm:intro3}
The canonical extensions to group rings of the representations of welded braids into
oriented braid algebras, constructed in Theorem {\em \ref{thm:intro2}}, respect the
filtrations described above, and induce a multiplicative isomorphism 
at the associated graded level.
\end{theorem}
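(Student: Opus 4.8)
The plan is to extend $\rho_n:\BP_n\to\OO_n\rtimes\bbQ[\Sigma_n]$ (Theorem \ref{thm:intro2}) $\bbQ$--linearly to the group rings and to treat the two filtrations separately: first check that $\rho_n$ respects them, then show that the induced map on associated graded objects is a multiplicative isomorphism. On the target, the complete filtration of $\OO_n$ yields a multiplicative filtration $F_k=\OO_n^{(k)}\rtimes\bbQ[\Sigma_n]$, with $\Sigma_n$ placed in degree $0$; since $\Sigma_n$ preserves the chord--diagram grading, $\gr F=\gr(\OO_n)\rtimes\bbQ[\Sigma_n]$. I write $V_\bullet$ for the Vassiliev filtration of $\bbQ[\BP_n]$ defined in \S\ref{ss62}. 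Because $\rho_n$ is a homomorphism and both filtrations are multiplicative, any filtration--preserving statement will automatically upgrade to a \emph{multiplicative} one on associated graded level, so the real content is the bijectivity of $\gr\rho_n$.

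That $\rho_n(V_k)\subseteq F_k$ reduces, by multiplicativity of both filtrations, to the case $k=1$: a single resolved singularity is a difference of two welded braids with the same underlying permutation, hence with equal image in the degree--$0$ quotient $\bbQ[\Sigma_n]$, so the difference lies in $\OO_n^{(1)}\rtimes\bbQ[\Sigma_n]=F_1$; this is a direct check on the generators $\sigma_i$ and the permutations, using the explicit formulae of Theorem \ref{thm:newuniv}. Surjectivity of $\gr\rho_n$ is equally elementary: the degree--$0$ part $\bbQ[\Sigma_n]$ is hit by the permutation subgroup, and the degree--$1$ generators of $\gr(\OO_n)$, the oriented chords, appear as the leading terms of resolved singularities, so by multiplicativity $\gr\rho_n$ is onto.

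The substance is injectivity, and here I would first reduce to the pure part. Using the splitting $\BP_n=\PB_n\rtimes\Sigma_n$ into McCool's group $\PB_n$ of pure welded braids and $\Sigma_n$, together with the $\Sigma_n$--equivariance of $V_\bullet$, one gets $\gr_V\bbQ[\BP_n]\cong(\gr_V\bbQ[\PB_n])\rtimes\bbQ[\Sigma_n]$, compatibly with the target decomposition, so it suffices to prove that $\gr\rho_n$ restricts to an isomorphism $\gr_V\bbQ[\PB_n]\xrightarrow{\ \sim\ }\gr\OO_n$. On the pure group ring each elementary resolution equals $g^{\pm1}-1$ for a standard McCool generator $g$, so $V_1$ coincides with the augmentation ideal $I$ and, by multiplicativity, $V_k=I^k$; hence $\gr_V\bbQ[\PB_n]=\gr_I\bbQ[\PB_n]$ is the associated graded of the augmentation filtration.

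The decisive step is the passage from this augmentation associated graded to the enveloping algebra of a Lie algebra, and this is exactly where $1$--formality of McCool groups enters. By Quillen's theorem there is a natural surjection of graded Hopf algebras $U(\gr\PB_n\otimes\bbQ)\twoheadrightarrow\gr_I\bbQ[\PB_n]$; for a $1$--formal group this surjection is an isomorphism, so Theorem \ref{thm:mcformal}, which establishes $1$--formality of $\PB_n$ and identifies $\gr\PB_n\otimes\bbQ$ with the oriented holonomy Lie algebra $\PP_n$, yields $\gr_I\bbQ[\PB_n]\cong U(\PP_n)\cong\gr\OO_n$. It then remains to check that $\gr\rho_n$ realizes precisely this chain: in degree $1$ it sends the class of $g-1$ to the corresponding oriented chord, and since both sides are generated in degree $1$ and $\gr\rho_n$ is a surjective algebra map matching generators, it must be the identification above; the faithfulness of $\rho_n$ is a consistent check on this injectivity. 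The hard part is this last isomorphism $\gr_I\bbQ[\PB_n]\cong U(\gr\PB_n\otimes\bbQ)$: the surjection is never an issue, but its injectivity fails for general groups and is exactly what the formality computation of Theorem \ref{thm:mcformal} is needed to supply.
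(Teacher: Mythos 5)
Your route is, in outline, exactly the paper's own proof of Theorem \ref{thm:welded}: reduce along the splitting $\BP_n=\BC_n\rtimes\Sigma_n$ (note the paper writes $\BC_n$ for the McCool group; $\PB_n$ is reserved for the classical pure braids), identify the Vassiliev filtration on the pure part with the $I$--adic filtration, and invoke Theorem \ref{thm:mcformal} plus Quillen theory to get $\gr_I^*\bbQ[\BC_n]\cong\OO_n^*$, matching degree--one generators. But one step you assert is precisely the nontrivial technical point, and your justification for it does not suffice. The identification $\gr_J^*\bbQ[\BP_n]\cong\gr_I^*\bbQ[\BC_n]\otimes\bbQ[\Sigma_n]$ does not follow from $\Sigma_n$--equivariance and multiplicativity alone: the generators $\sigma_i-s_i$ of $J$ do not lie in $\bbQ[\BC_n]$, so one must prove $J^k=I^k\,\bbQ[\BP_n]=I^k\otimes\bbQ[\Sigma_n]$. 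This is the paper's Lemma \ref{lem:start}: the inclusion $BIB\subseteq J$ needs the conjugation trick $a_{ij}=s(\sigma_i s_i^{-1})s^{-1}$ with $s\in\Sigma_n$ to capture \emph{all} McCool generators from the $\sigma_i-s_i=(a_{i,i+1}-1)s_i$, and the equalities $BI^kB=I^kB=BI^k$ rest on an argument imported from \cite{Pa4}. Your ``$V_1=I$, hence $V_k=I^k$ by multiplicativity'' yields only $I^k\subseteq J^k\cap\bbQ[\BC_n]$; the reverse inclusion is the actual content and is what makes $\gr^*(\Psi)$ an isomorphism of filtered objects in the paper's \S\ref{ss63}.

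Second, you mislocate the difficulty in the final step, though harmlessly. Over a field of characteristic zero, Quillen's theorem gives the isomorphism $\gr_I^*\bbQ[G]\cong\UU\bigl(\gr_{\Gamma}^*(G)\otimes\bbQ\bigr)$ for \emph{every} group $G$; injectivity of that comparison map is never the issue rationally, contrary to your closing claim. What can fail, and what Theorem \ref{thm:mcformal} actually supplies, is the identification of $\gr_{\Gamma}^*(\BC_n)\otimes\bbQ$ with the quadratic Lie algebra $\LL_n^*$ (part \eqref{mc2}); the surjection whose injectivity is genuinely at stake is the one from the enveloping algebra of the \emph{holonomy} (quadratic) Lie algebra, and that is where $1$--formality enters. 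The paper streamlines this by using part \eqref{mc3} directly: since $R_n$ is a Malcev completion, the general theory of \cite{Q} immediately gives that $\gr^*(R_n)\colon\gr_I^*(\BC_n)\to\gr_F^*(\OO_n)=\OO_n^*$ is an isomorphism, which also disposes of your separate ``generators match'' verification. With Lemma \ref{lem:start} supplied and the Quillen step restated, your argument coincides with the paper's.
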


It is straightforward to check that the inclusion of group rings, 
$\bbQ [\BB_n] \hookrightarrow \bbQ [\BP_n]$, respects the abovementioned filtrations.
Consequently, the universal representations from Theorem \ref{thm:intro2},
$\bbQ [\BP_n] \rightarrow \OO_n \rtimes \bbQ [\Sigma_n]$, give finite type invariants for braids,
by restriction to $\bbQ [\BB_n]$. For $n\le 3$, it turns out that all finite type invariants
for braids arise in this way. See Section \ref{sec:new} for more details, including a
quantitative comparison between the associated graded algebras, $\AA_n^*$ and $\OO_n^*$.

\section{Dramatis personae}
\label{not}

Let us present the objects that inspired our study: braid groups and braid algebras.

\subsection{}
\label{ss21}

The {\em braid} and {\em pure braid} groups were 
defined and studied by E. Artin~\cite{Art}. The geometric braid group 
on $n$ strings is isomorphic to  
the group $ \BB _n $ generated by $\sigma _1, \dots,\sigma _{n-1}$, with 
defining relations
\begin{equation}
\label{eq11}
\left \{ 
\begin{array}{lcl}
\sigma _i\sigma _j=\sigma _j \sigma _i \, ,  &  {\rm for} & 2\leq |i-j|\, ;  \\
\sigma _{i+1}\sigma _i\sigma _{i+1}=\sigma _i\sigma _{i+1} \sigma _i \, ,  & 
{\rm for} & 1\leq i\leq n-2 
\end{array} 
\right .   
\end{equation}            
Using the natural morphism onto the symmetric group $\Sigma_n$,
$ \sigma _i \mapsto s_i :=(i,i+1) $, Artin identified
its kernel with the subgroup of {\em  pure braids}:
\begin{equation}
\label{eq:bsigma}
1\to \PB _n\to \BB _n\to \Sigma _n \to 1 \, . 
\end{equation}
As a set of generators for $ \PB _n $ we choose the elements
$$ \a_{ji}=\sigma _{i-1}\sigma _{i-2}\cdots \sigma _{j+1}\sigma _j^2
\sigma _{j+1}^{-1}
\cdots \sigma _{i-2}^{-1}\sigma _{i-1}^{-1}\, , \quad 1\leq j<i\leq n\, .$$
We also have a canonical embedding
\begin{equation}
\label{eq:bstab}
\BB _{n-1}\hookrightarrow \BB _n \, ,  \mbox{ given by } \sigma _i 
\mapsto \sigma _i \, , i=1,\ldots ,n-2  \, , 
\end{equation}
and an obvious stability property, expressed by the commuting diagram
$$ \begin{array}{ccccccccc}
1 & \to & \PB _{n-1} & \to & \BB _{n-1} & \to & \Sigma _{n-1} & \to & 1 \\
  &     & \downarrow      &     & \downarrow &     & \downarrow    &     &   \\
1 & \to & \PB _n     & \to & \BB _n     & \to & \Sigma _n     & \to & 1 
\end{array} $$
See also \cite{Mor} for complete proofs.

\subsection{}
\label{ss22}

To define the corresponding braid algebras, we  need 
{\em complete topological Hopf algebras} and 
{\em complete topological Lie algebras}; see \cite{Q} for details.  
Consider the tensor algebra, i.e., the free associative algebra 
with coefficients in a field $ \bbK $ of characteristic $ 0 $ ,
$ \bbT _{\bbK }\langle A_1,\ldots ,A_n\rangle $, where deg $A_i =1$ and the 
comultiplication is defined by 
$ \Delta A_i= A_i\otimes 1+1\otimes A_i$. 
We denote by $ \bbK \langle \langle A_1,\ldots ,A_n\rangle \rangle $
the completion of this Hopf algebra, with respect to the degree filtration. 
Its primitive part is the complete free Lie algebra,
$ \widehat{\bbL} \langle A_1,\ldots ,A_n\rangle =
\Prim \,\bbK \langle \langle A_1,\ldots ,A_n\rangle \rangle$.
We denote by $ \widehat{\bbT} ^{>k}\langle A_1,\ldots ,A_n\rangle $ and 
$ \widehat{\bbL} ^{>k}\langle A_1,\ldots ,A_n\rangle $ the corresponding complete 
filtrations. We denote the congruence modulo these ideals by $ \equiv _{k+1} $;
for instance, $ f\equiv _{2}g $ means that $ f $ and $ g $ have the same
linear part and the same constant term, if 
$f,g \in \bbK \langle \langle A_1,\ldots ,A_n\rangle \rangle $.
Similar considerations apply to arbitrary complete Hopf and Lie algebras.

Kohno~\cite{Kohn} and \Dd~\cite{Drin} introduced
infinitesimal versions of (pure) braid groups.
\begin{definition}[\cite{Kohn}]
\label{brinf}
The {\em infinitesimal Artin Hopf algebra} is the complete Hopf algebra 
given by the presentation:
$$ \AA _n \!=\! \bbK \langle \langle  t_{ij}=t_{ji} , 1\leq i\neq j\leq n
\mid [t_{ij},t_{ik}+t_{jk}]=0,[t_{ij},t_{kl}]=0 \mbox{ if } \{ i,j\}\cap
\{ k,l\} \! = \! \emptyset \rangle \rangle , $$
where $[u,v]:= uv-vu$ denotes the algebra commutator.
The {\em infinitesimal Artin Lie algebra} is the complete Lie algebra 
given by the presentation:
$$ \PP _n=\widehat {\bbL} \langle  t_{ij}=t_{ji} , 1\leq i\neq j\leq n
\mid [t_{ij},t_{ik}+t_{jk}]=0,[t_{ij},t_{kl}]=0 \mbox{ if } \{ i,j\}\cap
\{ k,l\}=\emptyset \rangle $$
\end{definition}

These two algebras determine each other: 
$  \AA _n=\UU \PP _n$  and $\PP _n=\Prim ( \AA _n) $.

There is a natural left action of the symmetric group $ \Sigma _n $ on 
the above algebras, defined by 
$ \pi (t_{ij})=t_{\pi (i)\pi (j)} $.
We shall use the exponential notation $ \pi (\Phi):= {}^{\pi }\Phi $.
For instance, if $ \pi =ijk:=
 \left ( \begin{array}{ccc} 
           1 & 2 & 3 \\
           i & j & k
          \end{array}
 \right ) \in  \Sigma _3 $, we denote $ \pi (\Phi)  $
by $ \mbox{}^{ijk}\Phi $.

\begin{definition}[\cite{Drin}]
\label{def:braidalg}
The {\em braid algebra} $ \AA _n \rtimes \bbK [\Sigma _n] $ is the 
semidirect algebra product, that is, $ \AA _n \otimes \bbK [\Sigma _n] $, 
with twisted  multiplication given by 
$ (a\otimes x)\cdot (b\otimes y)=a\cdot \mbox{}^xb\otimes xy $.
\end{definition}

The algebra $ \AA _n \rtimes \bbK [\Sigma _n] $ 
contains as  a multiplicative subgroup
the semidirect group product $ \expsem $. We thus
have split exact sequences of groups,
$$ 1\to \exp\PP _n\to \exp \PP _n\rtimes \Sigma _n \to  
\Sigma _n \to 1 \, , $$
together with compatible canonical  embeddings,
$$ \exp\PP _{n-1}\rtimes \Sigma _{n-1} \hookrightarrow 
\exp\PP _n\rtimes \Sigma _n \, .$$

\subsection{}
\label{ss23}
 
We will start by considering families of representations, 
$$ \rho _n:\BB _n\to  \AA _n \rtimes \bbK [\Sigma _n]\, ,$$
satisfying the following four natural properties, extracted from the work of \Dd \cite{Drin}.
\begin{itemize}
\item  {\bf Exponential type.} The representation $ \rho _n $ 
factorizes through the exponential subgroup:
  \begin{center} 
  \begin{picture}(300,80)
     \put(-10,38){$ {\bf (E)} $}
     \put(175,3){$ \AA _n \rtimes \bbK [\Sigma _n] $}
     \put(35,60){$ \BB _n $} \put(170,63){$ \exp \PP _n\rtimes \Sigma _n $}
     \put(63,65){\vector(1,0){100}} \put(100,70){$ \rho _n $} \put(100,27){$ \rho _n $}
     \put(195,55){\vector(0,-1){35}} \put(60,60){\vector(2,-1){100}}
  \end{picture}
  \end{center}
\item  {\bf Symmetry.} The diagram below commutes:
  \begin{center} 
  \begin{picture}(300,80)
     \put(35,3){$ \Sigma _n $} \put(60,5){\line(1,0){110}} \put(-10,38){$ {\bf (\Sigma )} $}
     \put(185,3){$ \Sigma _n $} \put(60,10){\line(1,0){110}}
     \put(35,63){$ \BB _n $} \put(170,63){$ \exp \PP _n\rtimes \Sigma _n $}
     \put(60,65){\vector(1,0){100}} \put(100,70){$ \rho _n $}
     \put(40,55){\vector(0,-1){35}} \put(190,55){\vector(0,-1){35}}
  \end{picture}
  \end{center}
\item  {\bf Stability.} One has commutative diagrams
  \begin{center} 
  \begin{picture}(300,80)
     \put(35,3){$  \BB _n $} \put(60,5){\vector(1,0){100}}
     \put(170,3){$  \exp \PP _n\rtimes \Sigma _n $}
     \put(100,10){$ \rho _n $}  \put(-10,38){$ {\bf (S)} $}
     \put(35,63){$ \BB _{n-1} $} 
     \put(170,63){$ \exp \PP _{n-1}\rtimes \Sigma _{n-1} $}
     \put(60,65){\vector(1,0){100}} \put(100,70){$ \rho _{n-1} $}
     \put(40,55){\vector(0,-1){35}} \put(190,55){\vector(0,-1){35}}
  \end{picture}
  \end{center}
\item {\bf Normalization.} The images of the generators $ (\sigma _i)_{i=1,n-1} $
  satisfy
$$ {\bf (N) } \quad \, \, \quad \rho _n \sigma _i=u_i\otimes s_i\, , \quad
\mbox{where} \quad u_i \in \exp \PP_n \quad \mbox{and} \quad 
u_i\equiv_2 1+\frac{t_{i,i+1}}{2} \, . $$ 
\end{itemize}

\begin{remark}
\label{orm}
From (E), $(\Sigma)$ and  (N), we obtain the 
following formula for the pure braid generators (abbreviating from now on
$u\otimes \id$ to $u$):
$$ \rho _n \a_{ji}\equiv_2 1+t_{ij}  \, .$$
Indeed, 
$ \rho_n \sigma _i^2=u_i\cdot ^{s_i}u_i\equiv_2 
(1+\frac{t_{i,i+1}}{2})^2\equiv_2 1+t_{i,i+1} $. Hence, 
$\rho_n \a_{ji} = us \cdot \rho_n \sigma_j^2 \cdot s^{-1}u^{-1}
\equiv_2 {}^s \rho_n \sigma_j^2$, where $u\in \exp \PP_n$, 
$s= s_{i-1}\cdots s_{j+1}$ and $\rho_n (\sigma_{i-1}\cdots \sigma_{j+1})= us$.
It is easy to check that ${}^s \rho_n \sigma_j^2 \equiv_2 1+t_{ji}$, 
which proves our claim.
\end{remark}

\section{\Dd  associators  and \Dd  representations}
\label{sec:kaz}

Following \Dd \cite{Drin}, we recall a method for constructing representations
having the four properties described in Section \ref{not}, based on the
notion of {\em associator}.

\subsection{}
\label{ss31}

The complete Hopf algebra $\bbK \langle \langle A, B \rangle \rangle$ has a
natural involution $s$: ${}^s A= B$, ${}^s B=A$. Given 
$\Phi \in \bbK \langle \langle A, B \rangle \rangle$, set
$\Phi_t := \Phi (t_{12}, t_{23})\in \AA_3$.

\begin{definition}
[~\cite{Drin}]
\label{assoc}
An element $ \Phi \in \kab $ 
is called an {\em associator} if it 
satisfies the following conditions :
$$ \begin{array}{lc}
{\bf (AE) } &     \Phi = \exp(\varphi )\, , \mbox{ with } \varphi \in \Alg \\
{\bf (AS) } &     \mbox{}^s\Phi = \Phi ^{-1} \\
{\bf (H1) } &   \exp (\frac {t_{12}+t_{13}}{2})=\mbox{}^{231}\Phi _t^{-1}
\cdot \exp (\frac {t_{13}}{2})\cdot \mbox{}^{213}\Phi _t \cdot
 \exp (\frac {t_{12}}{2})\cdot \Phi _t^{-1} \\
{\bf (H3) } &  \! \! \! \exp (\frac {t_{13}+t_{23}}{2})=\mbox{}^{312}\Phi _t \cdot
 \exp (\frac {t_{13}}{2})\cdot \mbox{}^{132}\Phi _t^{-1} \cdot 
\exp (\frac {t_{23}}{2})\cdot \Phi _t \\
{\bf (P) }  & \!  \Phi (t_{12},t_{23}+t_{24})\Phi (t_{13}+t_{23},t_{34})=
   \Phi  (t_{23},t_{34})\Phi (t_{12}+t_{13},t_{24}+t_{34})
   \Phi (t_{12},t_{23})
\end{array}  $$
\end{definition}

The first two equations must hold in $ \kab $, the next two in $\AA_3$,
and the last in $ \AA _4$.
\Dd  proved that associators exist; see \cite[Proposition 5.4]{Drin}.

\begin{definition}
\label{def:semiass}
We will say that $ \Phi \in \kab $ 
is a {\em semi-associator} if it 
satisfies the properties from Definition \ref{assoc}, except (P).
\end{definition}

Let us remark that, in the definition of a \semass ,
(H1) and (H3) are equivalent; see \cite[p. 848]{Drin}.

\subsection{}
\label{ss32}

\Dd ~\cite{Drin} used associators to construct families of representations, 
$ \{ \rho _n:\BB _n\to \semk \}$, defined by the formulae:
\begin{equation}
\label{eq:kzrep}
\left \{ 
\begin{array}{lll}
        \sigma _1 & \mapsto &  \exp (\frac{t_{12}}{2})\otimes s_1 \, ; \\
        \sigma _{i>1} & \mapsto &  \Phi (\sum_{j<i}t_{ji}, t_{i,i+1})^{-1}\cdot
         (\exp (\frac{t_{i,i+1}}{2})\otimes s_i)\cdot 
             \Phi (\sum_{j<i}t_{ji}, t_{i,i+1})
\end{array} 
\right. 
\end{equation}

\begin{theorem}[~\cite{Drin},~\cite{Piun}]
\label{thm:repd}
If $ \Phi \in \kab$ is an \ass , the above formulae define representations
$$ \rho _n :\BB _n\to \semk \, , $$
satisfying the properties {\em (E), $(\Sigma )$, (S), (N)} from Section {\em \ref{not}}.
\end{theorem}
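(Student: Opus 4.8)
The plan is to split the statement into two essentially independent parts: the four properties (E), $(\Sigma)$, (S), (N), which are checks on the generators, and the assertion that the formulae \eqref{eq:kzrep} actually define a homomorphism, i.e.\ that the images of $\sigma_1,\dots,\sigma_{n-1}$ respect the defining relations \eqref{eq11}. Writing $\Phi_i:=\Phi(\sum_{j<i}t_{ji},t_{i,i+1})$ and multiplying out in the semidirect product, one finds $\rho_n\sigma_i=u_i\otimes s_i$ with $u_i=\Phi_i^{-1}\exp(\frac{t_{i,i+1}}{2})\,{}^{s_i}\Phi_i\in\AA_n$. By (AE), $\Phi=\exp(\varphi)$ with $\varphi\in\Alg$; substituting $A=\sum_{j<i}t_{ji}$ and $B=t_{i,i+1}$ gives $\Phi_i=\exp(\varphi_i)$ with $\varphi_i\in\PP_n$, so $\Phi_i$ is grouplike and $u_i\in\exp\PP_n$. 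Together with the fact that the $\Sigma_n$-component of $\rho_n\sigma_i$ is $s_i$, this yields (E) and $(\Sigma)$ at once, since a subgroup contains the whole image once it contains the generators. As $\varphi$ has neither constant nor linear part, $\Phi_i\equiv_2 1$ and ${}^{s_i}\Phi_i\equiv_2 1$, whence $u_i\equiv_2 1+\frac{t_{i,i+1}}{2}$, which is (N). Finally, for $i\le n-2$ the factors $\Phi_i$ and $\exp(\frac{t_{i,i+1}}{2})$ only involve generators $t_{ab}$ with $a,b\le n-1$, so $\rho_n\sigma_i$ lies in $\AA_{n-1}\rtimes\bbK[\Sigma_{n-1}]$ and agrees there with $\rho_{n-1}\sigma_i$; this is (S).

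The real content is the homomorphism property, i.e.\ verifying \eqref{eq11}. Since the $\Sigma_n$-components are the transpositions $s_i$, which already satisfy the braid relations, each relation reduces, after cancelling permutations in the semidirect product, to an identity in $\exp\PP_n$. The commutativity relations $\sigma_i\sigma_j=\sigma_j\sigma_i$ for $|i-j|\ge 2$ follow from locality: the $\AA_n$-factor of $\rho_n\sigma_i$ is assembled from $\sum_{k<i}t_{ki}$ and $t_{i,i+1}$ together with their $s_i$-images, and using the infinitesimal relations $[t_{ab},t_{ac}+t_{bc}]=0$ and $[t_{ab},t_{cd}]=0$ for disjoint index pairs one checks that this factor commutes with the corresponding factor of $\rho_n\sigma_j$. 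The substance lies instead in the three-string Yang--Baxter relation $\sigma_{i+1}\sigma_i\sigma_{i+1}=\sigma_i\sigma_{i+1}\sigma_i$. For $i=1$, $n=3$ one has $\rho_3\sigma_1=\exp(\frac{t_{12}}{2})\otimes s_1$ and $\rho_3\sigma_2=\Phi_t^{-1}(\exp(\frac{t_{23}}{2})\otimes s_2)\Phi_t$; expanding both triple products and transporting the permutation factors to the right converts the desired equality into a relation among $\Phi_t$, its $\Sigma_3$-conjugates, and the factors $\exp(\frac{t_{ij}}{2})$, which is exactly the conjunction of the hexagon identities (H1) and (H3). Here (AS) is what guarantees that (H1) and (H3) are equivalent, as already noted after Definition \ref{def:semiass}.

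The genuinely harder and, I expect, decisive step is to deduce the Yang--Baxter relation for arbitrary $i$ and $n$ from this three-string case. When $i>1$ the element $\rho_n\sigma_i$ involves $\Phi$ evaluated at the \emph{lumped} argument $\sum_{j<i}t_{ji}$ rather than a single generator, so the hexagon does not apply verbatim. The key is to show that collapsing strings $1,\dots,i-1$ into a single string is coherent, i.e.\ that $\Phi(\sum_{j<i}t_{ji},t_{i,i+1})$ behaves like a genuine associator for the three-object system (lump, $i$, $i{+}1$); this coherence is precisely what the pentagon axiom (P) furnishes, being the cocycle identity expressing compatibility of $\Phi$ with the doubling substitutions $t_{a\bullet}\mapsto t_{a\bullet'}+t_{a\bullet''}$. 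Iterating (P) reduces every instance of the braid relation to the three-string case treated above. Equivalently, (AE), (AS), (H1)/(H3) and (P) are exactly the axioms making the associated \Dd category braided monoidal, so that Mac Lane coherence yields \eqref{eq11} for all $n$; the detailed verification is carried out in \cite{Drin} and \cite{Piun}.
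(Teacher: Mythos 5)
Your proposal is correct, but it is doing substantially more work than the paper's own proof, which reads in its entirety: the representation property and the axioms (E), $(\Sigma)$, (S) are taken as known from \cite{Drin} and \cite{Piun} (to whom the theorem is attributed), and ``the only new claim concerns the normalization property,'' verified by computing $\rho_n\sigma_i=u_i\otimes s_i$ with $u_i=\Phi(\sum_{j<i}t_{ji},t_{i,i+1})^{-1}\exp(\frac{t_{i,i+1}}{2})\,{}^{s_i}\Phi(\sum_{j<i}t_{ji},t_{i,i+1})$ as in \eqref{eq:udrin} and noting that (AE) forces $\Phi\equiv_2 1$. On that one point your argument coincides with the paper's word for word. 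Beyond it, you reconstruct sketches of the cited material, and these are sound: (E) and $(\Sigma)$ do follow from group-likeness of the substituted $\Phi$ once the homomorphism property is granted; (S) follows since for $i\le n-2$ only generators $t_{ab}$ with $a,b\le n-1$ occur; and your ``locality'' argument for the far commutations is legitimate, though the supports are not disjoint --- the actual mechanism is that $\sum_{k<j}t_{kj}$, $\sum_{k<j}t_{k,j+1}$ and $t_{j,j+1}$ centralize the subalgebra generated by the $t_{ab}$ with $a,b<j$, by the infinitesimal braid relations, so $u_j$ commutes with $u_i$ and the relevant $\Sigma_n$-conjugations act trivially. Your identification of the three-string braid relation with the hexagon is consistent with the paper's Section \ref{sec:rep3}, where Lemma \ref{lem:final} shows (YB) at $n=3$ is equivalent to (H3) \emph{alone} (with (H1) equivalent via (AS), cf.\ Lemma \ref{lem:ybas}), so ``the conjunction of (H1) and (H3)'' is mildly redundant but not wrong. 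For the genuinely hard step --- reducing the braid relation for arbitrary $i$ and $n$ to three strings via the pentagon and coherence --- you, exactly like the paper, defer to \cite{Drin} and \cite{Piun}; that is acceptable here precisely because the theorem is stated as a citation with one new claim, but you should be clear that your final paragraph is heuristic scaffolding for the references rather than an independent verification.
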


\begin{proof}
The only new claim concerns the normalization property. This may be checked as follows.
First, it is straightforward to deduce from \eqref{eq:kzrep} that 
$\rho_n \sigma_i = u_i \otimes s_i$, where 
\begin{equation}
\label{eq:udrin}
u_i= \Phi (\sum_{j<i}t_{ji}, t_{i,i+1})^{-1}\cdot
         \exp (\frac{t_{i,i+1}}{2})\cdot 
         {}^{s_i} \Phi (\sum_{j<i}t_{ji}, t_{i,i+1})\, .
\end{equation}
Axiom (AE)  implies that $\Phi\equiv_2 1$, which yields property (N).
\end{proof}

Another important feature is that all \Dd representations \eqref{eq:kzrep} 
are faithful. This follows from ($\Sigma$), \cite[p. 848]{Drin}, and 
\cite[Proposition 1.3.3]{K2}.

\section{Universal representations of $ 3 $-braids}
\label{sec:rep3}

We may now give the proof of Theorem \ref{thm:main}.

\subsection{}
\label{ss41}

Given a  representation $ \rho :\BB _3\to \sem3 $, denote by $\rho'$ 
its restriction to $\BB_2$ (embedded in $\BB_3$ as explained in \S\ref{ss21}).

\begin{definition}
\label{def:urepb}
A representation $\rho$ as above is called {\em universal} if the family
$\{ \rho, \rho' \}$ satisfies properties (E), $(\Sigma)$, (S) and (N) from
\S\ref{ss23}. 
\end{definition}

The set of universal representations of $\BB_3$ will be denoted by
$\UU rep\, (\BB_3)$.

\begin{lemma}
\label{thm:dim2}
There is a unique representation $ \rho' :\BB _2\to \semd $ that satisfies 
conditions {\em (E), $(\Sigma)$} and {\em (N)}, given by  
$\rho' \sigma _1= \exp (\frac {t_{12}}{2})\otimes s_1$.
\end{lemma}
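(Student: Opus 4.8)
The plan is to show that conditions (E), $(\Sigma)$ and (N) together force the value $\rho' \sigma_1= \exp(\frac{t_{12}}{2})\otimes s_1$, and conversely that this formula defines a genuine representation. Since $\BB_2$ is the free group on the single generator $\sigma_1$, a representation of $\BB_2$ into any group is the same thing as an arbitrary choice of image for $\sigma_1$; there are no braid relations to verify in this rank-one case. So the existence half is automatic once I exhibit a well-defined element of the target, and the whole content lies in the \emph{uniqueness}, i.e. in pinning down $\rho' \sigma_1$ completely from the three listed properties.

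First I would unwind what (E), $(\Sigma)$ and (N) say about $\rho' \sigma_1$. By exponential type (E), the image lands in $\exp\PP_2 \rtimes \Sigma_2$, so I may write $\rho' \sigma_1= u_1\otimes g$ with $u_1\in \exp\PP_2$ and $g\in \Sigma_2$. The symmetry property $(\Sigma)$ says that composing $\rho'$ with the projection $\exp\PP_2 \rtimes \Sigma_2 \to \Sigma_2$ recovers the canonical surjection $\BB_2\to \Sigma_2$ sending $\sigma_1\mapsto s_1$; hence $g=s_1$. Thus $\rho' \sigma_1= u_1\otimes s_1$, and everything now reduces to identifying the factor $u_1\in \exp\PP_2$.

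The heart of the argument is that $\exp\PP_2$ is one-dimensional, which rigidifies $u_1$. Indeed $\PP_2=\widehat{\bbL}\langle t_{12}\rangle$ is the free complete Lie algebra on the single generator $t_{12}$; with only one generator there are no nonzero brackets, so $\PP_2$ is the abelian (in fact one-dimensional, topologically) Lie algebra $\bbK\cdot t_{12}$, and $\exp\PP_2=\{\exp(\lambda t_{12})\mid \lambda\in\bbK\}$. Therefore $u_1=\exp(\lambda t_{12})$ for a unique scalar $\lambda$. Now I invoke the normalization (N): the requirement $u_1\equiv_2 1+\frac{t_{12}}{2}$ reads off the linear part of $\exp(\lambda t_{12})=1+\lambda t_{12}+\cdots$ as $\lambda t_{12}$, forcing $\lambda=\frac12$. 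This gives $u_1=\exp(\frac{t_{12}}{2})$ and hence the claimed formula, establishing uniqueness.

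I expect the only real subtlety to be the clean verification that $\exp\PP_2$ is exactly the one-parameter group $\{\exp(\lambda t_{12})\}$, so that the coefficient extraction in (N) is unambiguous; this follows from the defining presentation of $\PP_2$ in Definition \ref{brinf} (no relations survive with a single index pair) together with the standard exponential-logarithm correspondence between a complete Lie algebra and its exponential group. Once that structural point is in hand, both existence (take $\rho'\sigma_1$ to be the exhibited element, which lies in $\exp\PP_2\rtimes\Sigma_2$ and satisfies all three conditions by construction) and uniqueness are immediate, and there is no stability condition (S) to impose since $\BB_1$ is trivial. This completes the proof.
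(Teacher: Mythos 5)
Your proof is correct and follows the same route as the paper's: (E) and $(\Sigma)$ force $\rho'\sigma_1=\exp(\lambda t_{12})\otimes s_1$ because $\PP_2=\bbK\cdot t_{12}$ is one-dimensional, and (N) then pins down $\lambda=\frac{1}{2}$. You merely spell out the details the paper leaves implicit (freeness of $\BB_2$ making existence automatic, and the exponential--logarithm correspondence identifying $\exp\PP_2$ with the one-parameter group), which is fine.
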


\begin{proof}
Properties (E) and $(\Sigma)$ together are saying that 
$\rho' \sigma _1= \exp (\lambda t_{12})\otimes s_1 $, 
with $ \lambda \in \bbK$. By (N), $\lambda=\frac {1}{2} $.
\end{proof}

We parametrize representations $ \rho :\BB _3\to \sem3 $, using another 
presentation of $ \BB _3 $, derived from \eqref{eq11}:
\begin{equation}
\label{eq:gpres}
\BB _3=\langle \, \sigma _1, \Delta \mid \sigma _1\Delta \sigma _1=
\Delta \sigma _1^{-1}\Delta \, \rangle\, , 
\end{equation}
where $ \Delta =\sigma _1\sigma _2\sigma _1 $.
The fundamental element $ \Delta $ has the property that its 
square $ \Delta ^2 $ generates the center of the braid group~\cite{Chow}.
We also consider the element of $  \PP _3 $, $ T=\frac{1}{2}(t_{12}+t_{13}+t_{23})$ 
and observe that
\begin{equation}
\label{eq:ppres}
\PP _3=\bbK \cdot T\times \widehat {\bbL} \, \langle \,A:=t_{12},B:=t_{23} \, \rangle  \, ,
\end{equation}
as Lie algebras.
In particular, the center of the Lie algebra $ \PP _3 $ is $ \bbK \cdot T $.

\subsection{}
\label{ss42}

Let $\Psi =\exp(\psi)$ be a group-like element of $\kab$. Set
\begin{equation}
\label{eq42}
\left \{ 
\begin{array}{l} 
\rho \sigma _1=\exp (\frac {t_{12}}{2})\otimes s_1 
\in \exp (\PP_2) \rtimes \Sigma_2 \\
\rho \Delta =\exp (T)\cdot \Psi _t^{-1}\otimes 321
\in \exp (\PP_3) \rtimes \Sigma_3
\end{array}
\right .
\end{equation}

Theorem \ref{thm:main} is a consequence of the following result.

\begin{theorem}
\label{thm:dim3}
If $\Psi$ is a \semass (in the sense of {\em Definition \ref{def:semiass}}), then
\eqref{eq42} defines a universal representation $\rho \in \UU rep\, (\BB_3)$
(in the sense of {\em Definition \ref{def:urepb}}). Conversely, every universal
representation of $\BB_3$  has a unique parametrization of the form \eqref{eq42}, 
where $\Psi$ is a \semass.
\end{theorem}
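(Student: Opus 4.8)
The plan is to set up a bijection between semi-associators $\Psi$ and universal representations $\rho \in \UU rep\,(\BB_3)$ by showing that the defining formulae \eqref{eq42} translate, term by term, into the axioms of Definition \ref{def:semiass}. The strategy has two directions, and the heart of the matter is a single computation: expanding the braid relation $\sigma_1\Delta\sigma_1=\Delta\sigma_1^{-1}\Delta$ from presentation \eqref{eq:gpres} under $\rho$ and matching the result against the hexagon axiom (H1) (equivalently (H3), by the remark after Definition \ref{def:semiass}).

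First I would establish the forward direction. Assuming $\Psi$ is a semi-associator, I would substitute \eqref{eq42} into the braid relation and carry out the multiplication in $\exp(\PP_3)\rtimes\Sigma_3$, using the semidirect-product rule $(a\otimes x)(b\otimes y)=a\cdot{}^x b\otimes xy$. The permutation parts on both sides agree automatically (both give the identity in $\Sigma_3$ after the relation is applied, since $s_1\cdot 321\cdot s_1 = 321\cdot s_1\cdot 321$), so the content is the equation in $\exp(\PP_3)$ obtained from the $\AA_3$--factors. The term $\exp(T)$ is central in $\PP_3$ by \eqref{eq:ppres}, so it factors out cleanly and the surviving identity is precisely an equation among $\Psi_t$, its conjugates ${}^{\pi}\Psi_t$ for $\pi\in\Sigma_3$, and the exponentials $\exp(t_{ij}/2)$. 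I expect this to reduce, after applying the involution symmetry (AS) and the exponential-type condition (AE), to exactly the hexagon (H1). The remaining properties (E), $(\Sigma)$, (S), (N) for the family $\{\rho,\rho'\}$ then follow: (E) and $(\Sigma)$ are built into \eqref{eq42} by construction, (N) is checked on $\sigma_1$ directly and on $\sigma_2=\Delta\sigma_1^{-1}\Delta^{-1}\cdot\sigma_1$ (or the analogous expression) using (AE) to get $\Psi_t\equiv_2 1$, exactly as in the proof of Theorem \ref{thm:repd}, and the restriction $\rho'$ to $\BB_2$ is forced to be the unique representation of Lemma \ref{thm:dim2}, giving (S).

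Conversely, given a universal $\rho$, I would use (E), $(\Sigma)$ and (N) to pin down the possible forms of $\rho\sigma_1$ and $\rho\Delta$. Lemma \ref{thm:dim2} forces $\rho\sigma_1=\exp(t_{12}/2)\otimes s_1$. For $\rho\Delta$, condition $(\Sigma)$ forces the permutation part to be the image of $\Delta$ under $\BB_3\to\Sigma_3$, namely $321$; then (E) says the group-like $\PP_3$--factor can be written as $\exp(T)\cdot\Psi_t^{-1}$ for a unique group-like $\Psi_t\in\exp(\PP_3)$, and I would use \eqref{eq:ppres} to argue that $\Psi_t$ arises from a well-defined group-like $\Psi=\Psi(A,B)\in\kab$. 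Running the braid-relation computation backward then shows that the hexagon (H1) holds, which together with the involution condition forced by consistency gives (AS) and (H3); (AE) is immediate from group-likeness. This establishes that $\Psi$ is a semi-associator and that the parametrization is unique.

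The main obstacle I anticipate is the central braid-relation computation in both directions: keeping careful track of the $\Sigma_3$--conjugation action $\Phi\mapsto{}^{\pi}\Phi$ on the various copies of $\Psi_t$, and verifying that the equation produced by $\sigma_1\Delta\sigma_1=\Delta\sigma_1^{-1}\Delta$ is \emph{exactly} (H1) rather than merely equivalent to it up to a central correction. The delicate point is the role of the central factor $\exp(T)$: I would need to confirm that its contributions on the two sides cancel precisely, which relies on $T$ being central and on the specific normalization $T=\frac12(t_{12}+t_{13}+t_{23})$ chosen so that the degree-one data matches (N). Showing that the passage between the abstract $\Psi\in\kab$ and its evaluation $\Psi_t\in\AA_3$ is a bijection on the relevant (group-like, normalized) elements — so that no information is lost in either direction — is the other place where care is required.
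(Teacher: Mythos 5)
Your plan correctly identifies the central computation --- expanding the braid relation of presentation \eqref{eq:gpres} under \eqref{eq42} --- and in the paper this is exactly Lemma \ref{lem:final}: after factoring out the central $\exp(T)$ and using $[t_{12},t_{13}+t_{23}]=0$, the relation $\rho\Delta=\rho\sigma_2\cdot\rho\sigma_1\cdot\rho\sigma_2$ is equivalent to the hexagon (H3), unconditionally and with no central correction. But this computation yields (H3) and \emph{nothing else}; it never produces the involution axiom, and (H3) alone does not imply (AS) --- indeed the equivalence (H1) $\Leftrightarrow$ (H3) that you invoke is itself proved using (AS). So in your converse direction, the phrase ``the involution condition forced by consistency'' is precisely the missing argument, and it is the genuinely nontrivial part of the theorem. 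The paper extracts (AS) from a \emph{different} relation: centrality of $\Delta^2$ in $\BB_3$. Since $\rho\Delta^2$ must commute with the images of the pure braid generators, $\rho\a_{ji}=1+\mu_{ij}$ with $\mu_{ij}\equiv_2 t_{ij}$ (Remark \ref{orm}), and since any family $\{\mu_{ij}\}$ with $\mu_{ij}\equiv_2 t_{ij}$ topologically generates $\AA_3$ (Lemma \ref{l12}, a separate technical lemma proved by induction on filtration degree), $\rho\Delta^2$ is central in $\AA_3$, hence of the form $\exp(hT)$ by \eqref{eq:ppres}; comparing degree-one terms via $\Delta^2=\a_{13}\a_{23}\a_{12}$ forces $h=2$, and then $\exp(2T)=(\rho\Delta)^2=\exp(2T)\cdot\Phi^{-1}\cdot{}^{321}\Phi^{-1}$ gives ${}^{321}\Phi=\Phi^{-1}$, i.e., (AS) for $\Psi$. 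Without some version of this argument your converse establishes only (AE) and (H3), which does not make $\Psi$ a \semass.

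Two smaller points. First, ``(AE) is immediate from group-likeness'' is not correct: group-likeness only gives $\log\Psi\in\widehat{\bbL}\langle A,B\rangle$, whereas (AE) demands the vanishing of the linear term, $\log\Psi\in\Alg$. This is exactly what the normalization (N) applied to $\sigma_2$ buys: the paper's Lemma \ref{lem:norm} shows (N2) is equivalent to $\log\Phi\equiv_2 0$, and then the splitting \eqref{eq:ppres} (in which $t_{12},t_{23}$ generate a \emph{free} complete Lie factor) simultaneously kills any $T$-component, places $\log\Psi$ in $\Alg$, and resolves the injectivity of $\Psi\mapsto\Psi_t$ that you rightly flagged as delicate. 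Second, a slip: the permutation parts of the two sides of $\sigma_1\Delta\sigma_1=\Delta\sigma_1^{-1}\Delta$ are both $s_2=(23)$, not the identity. Relatedly, your forward direction does not actually need (AS) at all, since (H3) $\Leftrightarrow$ (YB) is an unconditional identity; planning to ``apply (AS)'' there is harmless, but it reflects the same mis-accounting of which axiom does what that leaves (AS) unproved in the converse.
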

 
We start by noting that Definition \ref{def:urepb}, Lemma \ref{thm:dim2} and presentation
\eqref{eq:gpres} readily imply that $\rho \in \UU rep\, (\BB_3)$ if
and only if $\rho$ is of the form \eqref{eq42}, with $\Psi_t$ replaced by 
$\Phi\in \exp(\PP_3)$, and the following two properties hold:
$$ {\bf (YB)} \quad \quad \rho \Delta = \rho \sigma_2 \cdot \rho \sigma_1 \cdot \rho \sigma_2$$
(expressing the fact that $\rho$ is a representation) and
$$ {\bf (N2)} \quad \quad \rho \sigma_2 =u_2 \otimes s_2\, ,\quad \mbox{with} \quad
u_2\equiv_2 1+ \frac{t_{23}}{2}\, .$$

\begin{lemma}
\label{lem:norm}
In the above setting, {\em (N2)} is equivalent to $\log \Phi \equiv_2 0$ in $\PP_3$.
\end{lemma}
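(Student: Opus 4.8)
The plan is to compute $\rho\sigma_2$ directly from \eqref{eq42} and inspect its linear part. Since $\Delta=\sigma_1\sigma_2\sigma_1$ in $\BB_3$, we have $\sigma_2=\sigma_1^{-1}\Delta\sigma_1^{-1}$, so
\[
\rho\sigma_2=(\rho\sigma_1)^{-1}\cdot\rho\Delta\cdot(\rho\sigma_1)^{-1}
\]
in $\sem3$. I would first record that $(\rho\sigma_1)^{-1}=\exp(-\frac{t_{12}}{2})\otimes s_1$ (using ${}^{s_1}t_{12}=t_{12}$), and that $T$ is $\Sigma_3$-invariant, since it spans the center of $\PP_3$ by \eqref{eq:ppres}.

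Next I would carry out the two semidirect-product multiplications, keeping track separately of the $\Sigma_3$-component and the $\exp(\PP_3)$-component. On permutations the product is $s_1\cdot 321\cdot s_1=s_2$, confirming that $\rho\sigma_2=u_2\otimes s_2$ has the required permutation part; on the $\exp(\PP_3)$-side the only nontrivial conjugation, by $312=s_1\cdot 321$, sends $\exp(-\frac{t_{12}}{2})$ to $\exp(-\frac{t_{13}}{2})$. Collecting the factors yields the closed form
\[
u_2=\exp\!\Bigl(-\tfrac{t_{12}}{2}\Bigr)\cdot\exp(T)\cdot{}^{s_1}(\Phi^{-1})\cdot\exp\!\Bigl(-\tfrac{t_{13}}{2}\Bigr).
\]

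The last step is reduction modulo degree $\geq 2$. Because the defining relations of $\PP_3$ are quadratic, its degree-one part is freely spanned by $t_{12},t_{13},t_{23}$, so I may write the linear part of $\log\Phi$ as $\alpha t_{12}+\beta t_{13}+\gamma t_{23}$; applying $s_1$ (which fixes $t_{12}$ and interchanges $t_{13}$ and $t_{23}$) and expanding each exponential through $\exp(x)\equiv_2 1+x$, I sum the linear contributions to obtain
\[
u_2\equiv_2 1-\alpha t_{12}-\gamma t_{13}+\Bigl(\tfrac{1}{2}-\beta\Bigr)t_{23}.
\]
Comparison with the target $1+\frac{t_{23}}{2}$ of (N2) forces $\alpha=\beta=\gamma=0$, and conversely these three conditions are precisely $\log\Phi\equiv_2 0$. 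I do not anticipate a genuine obstacle: the one delicate point is the bookkeeping of the permutation actions on the $t_{ij}$ inside the semidirect product. The mechanism becomes transparent once one observes that the linear term $\frac{1}{2}(t_{12}+t_{13}+t_{23})$ of $\exp(T)$ supplies exactly the desired $\frac{t_{23}}{2}$, while its $t_{12}$ and $t_{13}$ components are cancelled by the two $\sigma_1^{-1}$ factors.
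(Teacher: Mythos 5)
Your proof is correct and takes essentially the same route as the paper: computing $\rho\sigma_2=(\rho\sigma_1)^{-1}\cdot\rho\Delta\cdot(\rho\sigma_1)^{-1}$ in the semidirect product, your closed form for $u_2$ (with $s_1=213$) is exactly the paper's formula \eqref{eq:rho2}, $u_2=\exp(-\frac{t_{12}}{2})\cdot\exp T\cdot{}^{213}\Phi^{-1}\cdot\exp(-\frac{t_{13}}{2})$. Your $\alpha,\beta,\gamma$ bookkeeping in degree one is simply a spelled-out version of the paper's one-line reduction of (N2) to $(1+\frac{t_{23}}{2})\cdot{}^{213}\Phi^{-1}\equiv_2 1+\frac{t_{23}}{2}$, and all the permutation computations ($s_1\cdot 321\cdot s_1=s_2$, conjugation by $312$ sending $t_{12}$ to $t_{13}$) check out.
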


\begin{proof}
Since $ \Delta =\sigma _1\sigma _2\sigma _1 $, 
$$ \rho \sigma _2=(\exp (-\frac {t_{12}}{2})\otimes s_1)\cdot (\exp T\cdot \Phi ^{-1}\otimes 321)
\cdot (\exp (-\frac {t_{12}}{2})\otimes s_1 )\, .$$
It follows that 
\begin{equation}
\label{eq:rho2}
\rho \sigma_2 = \exp (-\frac {t_{12}}{2})\cdot \exp T\cdot ^{213}\Phi ^{-1}\cdot
\exp (-\frac {t_{13}}{2})\otimes s_2 \, .
\end{equation}
Condition (N2) becomes $(1+ \frac{t_{23}}{2})\cdot ^{213}\Phi^{-1} 
\equiv_2 1+ \frac{t_{23}}{2}$, whence the result.
\end{proof}

By resorting to \eqref{eq:ppres}, we infer that $\Phi= \Psi_t$, with
$\Psi =\exp (\psi)$ and $\psi\in \Alg$. The proof of Theorem \ref{thm:dim3}
is thus reduced to showing that the Yang--Baxter equation (YB) for $\Phi$
is equivalent to properties (AS) and (H) from Definition \ref{assoc} for $\Psi$.
To prove this equivalence, we need a preliminary result.

\begin{lemma}
\label{l12}
Assume $\{ \mu_{ij}\in \AA_n \}_{1\le i<j \le n}$ are such that
$ \mu_{ij}\equiv_2 t_{ij} $. Denote by $V$ the $\bbK$--span of $\{ \mu_{ij}\}$.
Then, for an arbitrary $ t\in \AA _n $, we have an expansion
$ t=\sum _{k\geq 0}v_k$, with the property that $v_k\in V^k :=
\overbrace{V\cdots V}^{k}$, for $k>0$, and $v_0 \in V^0 :=\bbK \cdot 1$.
\end{lemma}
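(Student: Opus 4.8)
The plan is to argue by successive approximation along the degree filtration of $\AA_n$, using two facts: that $\AA_n$ is generated in degree one, and that it is complete. The starting point is the observation that the defining relations of $\AA_n$ are homogeneous of degree two; hence $\AA_n$ is the completion of a graded algebra generated in degree one by the $t_{ij}$, and every $t\in \AA_n$ has a convergent homogeneous expansion $t=\sum_{k\ge 0}t^{(k)}$ in which each component $t^{(k)}$ is a $\bbK$--linear combination of length-$k$ monomials $t_{i_1j_1}\cdots t_{i_kj_k}$.

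Next I would exploit the hypothesis on $V$. Since $\mu_{ij}\equiv_2 t_{ij}$, each $\mu_{ij}$ has vanishing constant term and degree-one part equal to $t_{ij}$. Therefore, for any length-$k$ monomial, the product $\mu_{i_1j_1}\cdots \mu_{i_kj_k}$ lies in $V^k$ and satisfies $\mu_{i_1j_1}\cdots \mu_{i_kj_k}\equiv_{k+1}t_{i_1j_1}\cdots t_{i_kj_k}$: its lowest-degree term is precisely the corresponding monomial in the $t_{ij}$, the remainder having strictly larger degree.

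With these in hand the induction is routine. Set $v_0:=t^{(0)}\in \bbK\cdot 1=V^0$. Assuming $v_0,\dots,v_{k-1}$ have been built with $v_j\in V^j$ and $t\equiv_k \sum_{j<k}v_j$, let $w$ be the degree-$k$ component of $t-\sum_{j<k}v_j$; writing $w$ as a combination of length-$k$ monomials in the $t_{ij}$ and replacing each $t_{ij}$ by $\mu_{ij}$ in that combination yields $v_k\in V^k$ with $v_k\equiv_{k+1}w$. Then $t-\sum_{j\le k}v_j$ has vanishing components in all degrees $\le k$, which advances the induction. Since each $v_k\in V^k$ has degree $\ge k$, the series $\sum_k v_k$ is Cauchy; by completeness it converges, and $t-\sum_k v_k$ lies in the intersection of all the filtration ideals, which is $0$. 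Hence $t=\sum_k v_k$ with $v_0\in V^0$ and $v_k\in V^k$ for $k>0$, as required.

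The only genuine input is the generation-in-degree-one property of $\AA_n$; once that is granted, everything reduces to a filtered Gaussian elimination and I anticipate no real obstacle. The one point I would take care to verify is that the homogeneous components $t^{(k)}$ are spanned by \emph{monomials} (rather than iterated brackets) in the $t_{ij}$, which holds simply because $\AA_n$ is an associative algebra generated by the $t_{ij}$.
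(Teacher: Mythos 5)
Your argument is correct and follows essentially the same route as the paper's own proof, which compresses the identical induction into the single display $0 \equiv_k t-\sum_{i<k}v_i \equiv_{k+1} \sum k\mbox{-monomials in }(t_{ij}) \equiv_{k+1} \sum k\mbox{-monomials in }(\mu_{ij})$ --- precisely your successive-approximation step, resting on the same two inputs (generation of $\AA_n$ in degree one, and $\mu_{i_1j_1}\cdots\mu_{i_kj_k}\equiv_{k+1}t_{i_1j_1}\cdots t_{i_kj_k}$). Your additional checks (homogeneity of the defining relations, monomials versus brackets, convergence via completeness) merely make explicit what the paper leaves tacit.
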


\begin{proof}
By induction on $ k $,
$$ 0\equiv_k t-\sum_{i<k}v_i\equiv_{k+1}\sum k-\mbox{monomials in }(t_{ij})\equiv_{k+1}
\sum k-\mbox{monomials in }(\mu_{ij}) \, , $$
which completes the induction step.
\end{proof}

Next, we use the above lemma to deduce the following.

\begin{lemma}
\label{lem:ybas}
Equation {\em (YB)} for $\Phi$ implies condition {\em (AS)} for $\Psi$.
\end{lemma}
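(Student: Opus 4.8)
The plan is to start from the Yang--Baxter equation (YB), namely
$$\rho\Delta = \rho\sigma_2 \cdot \rho\sigma_1 \cdot \rho\sigma_2\, ,$$
and to feed in the explicit formulae \eqref{eq42} and \eqref{eq:rho2} so that (YB) becomes an identity in $\exp(\PP_3)\rtimes\Sigma_3$. Since the permutation parts on both sides are automatically equal to $321$, the content lives entirely in the $\exp(\PP_3)$--factor, so I would project to that factor and obtain an equation of the form $F(\Phi)=G(\Phi)$ in $\exp(\PP_3)\subset\AA_3$. Here $\Phi=\Psi_t$ with $\Psi=\exp(\psi)$, $\psi\in\Alg$, so $\log\Phi\equiv_2 0$ by Lemma \ref{lem:norm}; I will use this normalization freely. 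My goal is to extract from $F(\Phi)=G(\Phi)$ exactly the symmetry relation (AS), that is ${}^s\Psi = \Psi^{-1}$, equivalently ${}^{213}\Phi = {}^{132}\Phi^{-1}$ after translating the involution $s\colon A\leftrightarrow B$ into the action on $t_{12},t_{23}$.

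First I would rewrite (YB) as a relation purely among the factors $\exp(\pm t_{ij}/2)$, the central element $\exp(T)$, and the various conjugates ${}^\pi\Phi$ of $\Phi$ produced when the symmetric group elements sweep past the $\AA_3$--factors in the semidirect product. Concretely, substituting the two expressions for $\rho\sigma_2$ and the formula for $\rho\sigma_1$, and collecting the $\Sigma_3$--twists via the rule $(a\otimes x)(b\otimes y)=a\cdot{}^xb\otimes xy$, turns (YB) into a single balanced word $W(\Phi)=1$ in $\exp(\PP_3)$. Because $T$ is central in $\PP_3$ (by \eqref{eq:ppres}) and the half-twists $\exp(t_{ij}/2)$ are group-like, the central and exponential pieces can be moved to the ends and cancelled, leaving a relation involving only conjugates of $\Phi$ and $\Phi^{-1}$. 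The key algebraic input is that in $\exp(\PP_3)$ one may pass from an exact identity to its consequences modulo the filtration, and conversely assemble the full identity degree by degree.

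The decisive step is to \emph{invert} the process: rather than only deriving consequences of (AS), I must show (YB) forces (AS). For this I would use Lemma \ref{l12}, which lets me write any element of $\AA_3$ as a convergent sum of monomials in any family $\{\mu_{ij}\}$ with $\mu_{ij}\equiv_2 t_{ij}$; the natural choice here is the family of conjugated generators appearing in the $\rho\sigma_i$, so that the reduced word $W(\Phi)$ can be re-expressed and matched term by term against the symmetry relation. Applying the involution $s$ and comparing $W(\Phi)$ with its $s$--image should isolate the factor ${}^s\Psi\cdot\Psi$ and show it is trivial, giving exactly (AS). The main obstacle I anticipate is bookkeeping the noncommutative reordering: tracking precisely which permutation $\pi\in\Sigma_3$ conjugates $\Phi$ at each stage, and ensuring that the half-twist exponentials $\exp(\pm t_{ij}/2)$ cancel cleanly rather than leaving residual group-like factors that would spoil the identification ${}^s\Psi=\Psi^{-1}$. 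Keeping the centrality of $T$ and the group-like property of the half-twists in play throughout is what will make these residual terms disappear, and verifying their vanishing is the technical heart of the argument.
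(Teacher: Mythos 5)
Your plan has a genuine gap, and one of its intermediate steps is false as stated. The claim that ``the central and exponential pieces can be moved to the ends and cancelled, leaving a relation involving only conjugates of $\Phi$ and $\Phi^{-1}$'' does not hold: only $\exp(T)$ is central in $\exp(\PP_3)$, while the half-twists $\exp(\pm t_{ij}/2)$ do not commute with the conjugates ${}^{\pi}\Phi$ (group-likeness is irrelevant to commutation). The honest reduction of (YB) is exactly the hexagon \eqref{eq:f3} of Lemma \ref{lem:final}, in which the exponentials remain interleaved with the $\Phi$--factors; no cleaner word $W(\Phi)=1$ in the $\Phi$'s alone is available. From that point your proposal offers no concrete mechanism: comparing the reduced identity with its image under $s$ (equivalently, under the permutation $321$) merely transforms (H3) into the companion hexagon (H1) \emph{once {\rm (AS)} is already known} --- it does not isolate a factor ${}^{s}\Psi\cdot\Psi$, and you yourself defer ``the technical heart'' without supplying it. Your intended use of Lemma \ref{l12} (term-by-term matching of monomials built from the conjugated generators in the $\rho\sigma_i$) is likewise not an argument that visibly terminates.

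The missing idea is group-theoretic, not a word computation. Since (YB) makes $\rho$ a representation of $\BB_3$, and $\Delta^2$ is central in $\BB_3$, the element $\rho\Delta^2$ commutes with every $\rho\a_{ij}=1+\mu_{ij}$, where $\mu_{ij}\equiv_2 t_{ij}$ by Remark \ref{orm}. This is where Lemma \ref{l12} actually enters: taking these $\mu_{ij}$ as the spanning family, every element of $\AA_3$ is a convergent sum of monomials in the $\mu_{ij}$, so $\rho\Delta^2$ is central in all of $\AA_3$, and hence $\rho\Delta^2=\exp(hT)$ by \eqref{eq:ppres}. The normalization $\rho\Delta^2=\rho(\a_{13}\a_{23}\a_{12})\equiv_2 1+2T$ forces $h=2$, and then \eqref{eq42} yields
\[
\exp(2T)=(\rho\Delta)^2=\exp(2T)\cdot\Phi^{-1}\cdot {}^{321}\Phi^{-1}\, ,
\]
that is, ${}^{321}\Phi=\Phi^{-1}$, which translates into ${}^{s}\Psi=\Psi^{-1}$, i.e., (AS). Without this detour through the center of $\BB_3$ (or an equivalent substitute), the direct manipulation you sketch does not produce (AS): the only output of the substitution-and-reduction you describe is (H3) itself.
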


\begin{proof}
Knowing that $\rho$ is a representation and $\Delta^2\in \PB_3$ is central in $\BB_3$,
we infer that $ [\rho \Delta ^2, \rho\a_{ij}]=0 $ in $\AA_3$, for $1\le i<j \le 3$.
We also know from Remark \ref{orm} that $\rho \a_{ij}= 1+ \mu_{ij}$, with
$\mu_{ij}\equiv_2 t_{ij}$. Due to Lemma \ref{l12}, we obtain that $\rho \Delta^2$
is central in $\AA_3$. Hence, $\rho \Delta^2= \exp(hT)$, for some $h\in \bbK$;
see \eqref{eq:ppres}. On the other hand, 
$$\rho \Delta ^2= \rho (\a _{13}\a _{23}\a _{12})\equiv_2 1+2T \, ,$$
again by Remark \ref{orm}. This forces $h=2$. Therefore,
$\exp(2T)= (\rho \Delta)^2= \exp(2T)\cdot \Phi^{-1}\cdot ^{321}\Phi^{-1}$,
as follows from \eqref{eq42}. Finally, since $\Phi =\Psi_t$, 
$^{321}\Phi= \Phi^{-1}$ translates to $^s \Psi= \Psi^{-1}$, as asserted.
\end{proof}

\subsection{}
\label{ss43}

Recall that axioms (H1) and (H3) from Definition \ref{def:semiass} are equivalent.
With this remark, the Lemma below will finish the proof of Theorem \ref{thm:dim3}.

\begin{lemma}
\label{lem:final}
The Yang--Baxter equation {\em (YB)} for $\Phi$ is equivalent to 
the hexagonal axiom {\em (H3)} for $\Psi$.
\end{lemma}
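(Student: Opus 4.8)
The plan is to start from the explicit formula \eqref{eq:rho2} for $\rho\sigma_2$, together with $\rho\sigma_1=\exp(\frac{t_{12}}{2})\otimes s_1$ and the defining relation $\rho\Delta = \exp(T)\cdot\Phi^{-1}\otimes 321$ from \eqref{eq42}, and to expand both sides of (YB) as elements of $\exp(\PP_3)\rtimes\Sigma_3$. Since all three factors $\rho\sigma_2$, $\rho\sigma_1$, $\rho\sigma_2$ carry symmetric-group labels $s_2$, $s_1$, $s_2$ whose product is $s_2 s_1 s_2 = 321$, and the left-hand side $\rho\Delta$ also lives in the component labelled by $321$, the $\Sigma_3$--parts automatically match; the real content is the identity in $\exp(\PP_3)$ obtained by equating the $\AA_3$--factors. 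So the first step is to carry out the semidirect-product multiplication on the right-hand side carefully, tracking how each $s_i$ conjugates the intervening exponential factors via the action $\pi(t_{ij})=t_{\pi(i)\pi(j)}$, and to collect everything into a single word in the exponentials $\exp(\pm\frac{t_{ij}}{2})$ and the conjugates ${}^\pi\Phi$.

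Next I would compare the resulting word with $\exp(T)\cdot\Phi^{-1}$, recalling $T=\frac12(t_{12}+t_{13}+t_{23})$. The aim is to rearrange the computed product so that it reads as a sequence of exponentials of half-generators interspersed with conjugates ${}^{231}\Phi$, ${}^{132}\Phi$, ${}^{312}\Phi$ and so on, matching exactly the shape of axiom (H3): $\exp(\frac{t_{13}+t_{23}}{2}) = {}^{312}\Phi_t\cdot\exp(\frac{t_{13}}{2})\cdot{}^{132}\Phi_t^{-1}\cdot\exp(\frac{t_{23}}{2})\cdot\Phi_t$. Because $\Phi=\Psi_t$ and every conjugate of $\Phi$ is group-like in $\exp(\PP_3)$, the manipulation is purely an identity between group-like elements; I expect (YB) to translate, after cancelling the common central factor $\exp(2T)$ coming from the $\Delta^2$ bookkeeping already used in Lemma \ref{lem:ybas}, into precisely the hexagon relation. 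The equivalence is then genuinely an \emph{iff}: each algebraic rearrangement step is reversible, so no implication is lost in either direction.

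The main obstacle I anticipate is the bookkeeping of the symmetric-group conjugations: getting the permutation superscripts on the $\Phi$--factors exactly right (distinguishing ${}^{213}\Phi$, ${}^{231}\Phi$, ${}^{312}\Phi$, ${}^{132}\Phi$) is error-prone, and a single transposition misapplied will produce a relation that looks like a hexagon with the wrong indices. To control this I would compute $\rho\sigma_2\cdot\rho\sigma_1$ first, simplify, then right-multiply by $\rho\sigma_2$, at each stage pushing the permutation to the right past the exponentials and recording the induced index substitution on the $t_{ij}$. A useful sanity check is the degree-one truncation: modulo $\widehat{\bbT}^{>1}$ both sides must agree, which by (N) and Remark \ref{orm} they do, confirming the leading terms are aligned before the higher-order $\Phi$--contributions are matched. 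Once the word is in canonical form, identifying it with (H3) is immediate, and invoking the stated equivalence of (H1) and (H3) for semi-associators completes the reduction; combined with Lemma \ref{lem:ybas} this shows (YB) for $\Phi$ is equivalent to (AS) and (H) for $\Psi$, finishing the proof of Theorem \ref{thm:dim3} and hence of Theorem \ref{thm:main}.
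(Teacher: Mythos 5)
Your proposal is correct and follows essentially the same route as the paper: expand $\rho\sigma_2\cdot\rho\sigma_1\cdot\rho\sigma_2$ in the semidirect product using \eqref{eq:rho2} and \eqref{eq42}, match the $\AA_3$--factors against $\exp(T)\cdot\Phi^{-1}$ (the $\Sigma_3$--parts agreeing since $s_2s_1s_2=321$), and rearrange the resulting group-like identity, via an application of $s_2$, into axiom (H3). One small slip in your anticipated bookkeeping: the central factor to absorb here is $\exp(T)$ coming from $\rho\Delta$ itself (eliminated against $\exp(\frac{t_{13}+t_{23}}{2})$ by centrality of $T$, leaving $\exp(\frac{t_{12}}{2})$), not $\exp(2T)$ from $\Delta^2$, which enters only in Lemma \ref{lem:ybas}.
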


\begin{proof}
First, we may rewrite \eqref{eq:rho2} in the form
$$ \rho \sigma_2= \exp (\frac{t_{13}+t_{23}}{2})\cdot ^{213}\Phi^{-1}\cdot 
\exp (\frac{-t_{13}}{2})\otimes s_2\, ,$$
since $[t_{12}, t_{13}+t_{23}]=0$. Together with \eqref{eq42}, this leads to
$$\rho \sigma _2\cdot \rho \sigma _1\cdot \rho \sigma _2=
\exp (\frac{t_{13}+t_{23}}{2})\cdot 
^{213}\Phi ^{-1}\cdot \exp (\frac{t_{12}+t_{23}}{2})\cdot ^{132}\Phi ^{-1}\cdot 
\exp (-\frac{t_{23}}{2})\otimes 321 \, .$$
Comparing this with \eqref{eq42}, we find that (YB) is equivalent to
\begin{equation}
\label{eq:f1}
\exp (\frac{t_{12}}{2})\cdot \Phi^{-1}= 
^{213}\Phi ^{-1}\cdot \exp (\frac{t_{12}+t_{23}}{2})\cdot ^{132}\Phi ^{-1}\cdot 
\exp (-\frac{t_{23}}{2})\, ,
\end{equation}
or
\begin{equation}
\label{eq:f2}
\exp (\frac{t_{12}+t_{23}}{2})= ^{213}\Phi \cdot \exp (\frac{t_{12}}{2})\cdot \Phi ^{-1}
\cdot \exp (\frac{t_{23}}{2})\cdot ^{132}\Phi \, .
\end{equation}
Applying $s_2$ to \eqref{eq:f2}, we find the equivalent form
\begin{equation}
\label{eq:f3}
\exp (\frac{t_{13}+t_{23}}{2})= ^{312}\Phi \cdot \exp (\frac{t_{13}}{2})\cdot 
^{132}\Phi ^{-1} \cdot \exp (\frac{t_{23}}{2})\cdot \Phi \, ,
\end{equation}
which is precisely condition (H3) for $\Psi$.
\end{proof}

\begin{remark}
\label{rem:drincomp}
It is worth pointing out that the representation $\rho$ corresponding to
a semi-associator $\Psi$, in our Theorem \ref{thm:dim3}, is given by
\eqref{eq:kzrep}, as in the \Dd construction of representations coming from
an associator.

To check this, start with a group-like element $\Psi\in \kab$ and set 
$\Phi= \Psi_t$. For $i=2$, \eqref{eq:kzrep} gives
\begin{equation}
\label{eq:c1}
\rho \sigma_2= \Phi^{-1}\cdot \exp(\frac{t_{23}}{2})\cdot
^{132}\Phi \otimes s_2 \, .
\end{equation}
Comparing this with \eqref{eq:rho2}, we find that we must verify the equality
\begin{equation}
\label{eq:c2} 
\Phi^{-1}\cdot \exp(\frac{t_{23}}{2})\cdot ^{132}\Phi =
\exp(\frac{t_{13}+t_{23}}{2})\cdot ^{213}\Phi^{-1} \cdot \exp(-\frac{t_{13}}{2})\, ,
\end{equation}
that is,
\begin{equation}
\label{eq:c3}
\exp(\frac{t_{13}+t_{23}}{2})= \Phi^{-1}\cdot \exp(\frac{t_{23}}{2})\cdot
^{132}\Phi \cdot \exp(\frac{t_{13}}{2}) \cdot ^{213}\Phi \, .
\end{equation}
By applying $s_1$, we put \eqref{eq:c3} in the equivalent form
\begin{equation}
\label{eq:c4}
\exp(\frac{t_{13}+t_{23}}{2})= ^{213}\Phi^{-1}\cdot \exp(\frac{t_{13}}{2})\cdot
^{231}\Phi \cdot \exp(\frac{t_{23}}{2}) \cdot \Phi \, .
\end{equation}

Finally, \eqref{eq:c4} above is seen to coincide with axiom (H3) of
the semi-associator $\Psi$, by using the equality $^{321}\Phi= \Phi^{-1}$,
which corresponds to axiom (AS).
\end{remark}

\section{Braid-permutation and basis-conjugating groups}
\label{sec:mccool}

We examine analogs of the braid groups and braid algebras from Section \ref{not}.

\subsection{}
\label{ss51}

Denote by $\bbF_n$ the free group generated by $x_1, \dots, x_n$. The 
{\em braid-permutation group}, $\BP_n \subset \Aut (\bbF_n)$, was investigated 
in detail in \cite{FRR}. Its elements are the automorphisms $a\in \Aut (\bbF_n)$
acting by 
\begin{equation}
\label{eq:defbp}
a (x_i)= y_i^{-1}x_{s(i)}y_i\, , \quad \mbox{for} \quad 1\le i\le n\, ,
\end{equation}
where $y_i\in \bbF_n$ and $s\in \Sigma_n$, and the group product is composition of
automorphisms. The abelianization homomorphism, $\Aut (\bbF_n)\to \Aut (\bbZ^n)$,
induces a short exact sequence
\begin{equation}
\label{eq:bpsigma}
1\rightarrow \BC_n \rightarrow \BP_n \rightarrow \Sigma_n \rightarrow 1
\end{equation}
Unlike their braid analogs \eqref{eq:bsigma}, the above sequences are
naturally split by the permutation action of $\Sigma_n$ on $\{ x_1, \dots, x_n \}$.

The following presentation of $\BC_n$, also known as the {\em McCool group} of 
size $n$, was found in \cite{MC}. For $1\le i\ne j\le n$, denote by $a_{ij}$
the automorphism of $\bbF_n$ which sends $x_i$ to $x_j^{-1} x_i x_j$ and fixes $x_k$
for $k\ne i$. Then $\BC_n$ is generated by $\{ a_{ij} \}_{1\le i\ne j\le n}$, with
defining relations
\begin{equation}
\label{eq:relmc}
\left \{
\begin{array}{cll}
(I) & (a_{ik}, a_{jk})= 1\, , & \forall ~1\le i\ne j \ne k\le n\, ; \\
(II) & (a_{ij}, a_{ik} a_{jk})=1\, , & \forall ~1\le i\ne j \ne k\le n\, ; \\
(III) & (a_{ij}, a_{kl})= 1\, , & \forall ~1\le i\ne j \ne k \ne l\le n\, ,
\end{array}
\right .
\end{equation}
where $(x,y):= xyx^{-1}y^{-1}$ stands for the group commutator.
It is easy to check that 
\begin{equation}
\label{eq:equiv}
s a_{ij} s^{-1}= a_{s(i), s(j)}\, ,
\end{equation}
for all $1\le i\ne j\le n$ and $s\in \Sigma_n$. In condensed form,
$\BP_n =\BC_n \rtimes \Sigma_n$; see also \cite{CPVW}.

By Artin's theorem (see \cite[Theorem 1.9 on p.30]{Bi}), $\BB_n$ embeds 
onto the subgroup of elements in $\BP_n$ fixing $x_1\cdots x_n$.
As noted in \cite{FRR}, the embedding $\BB_n \hookrightarrow \BP_n$ is given by
\begin{equation}
\label{eq:sigmaas}
\sigma_i = a_{i, i+1} s_i\, , \quad \mbox{for} \quad 1\le i<n\, .
\end{equation}
Clearly, the sequences \eqref{eq:bsigma} and \eqref{eq:bpsigma} are compatible
with this embedding. In particular, $\PB_n \hookrightarrow \BC_n$.

We also have a canonical embedding, 
\begin{equation}
\label{eq:bpstab}
\BP_{n-1} \hookrightarrow \BP_n \, ,
\end{equation}
defined by $a (x_n)= x_n$, for $a\in \BP_{n-1}$, and commutative diagrams
\begin{equation}
\label{eq:stabcomp}
\begin{array}{ccc}
\BB_{n-1} & \longrightarrow & \BB_n \\
\downarrow & & \downarrow \\
\BP_{n-1} & \longrightarrow & \BP_n
\end{array}
\end{equation}
Finally, the projections of braid-permutation groups  onto symmetric groups from
\eqref{eq:bpsigma} are compatible with stabilization \eqref{eq:bpstab},
like in the case of braid groups; see the end of \S \ref{ss21}.

\subsection{}
\label{ss52}

We describe now the analogs of Artin and braid algebras from \S \ref{ss22}.

\begin{definition}
\label{def:inforalg}
The {\em oriented Artin Hopf algebra} is the complete Hopf algebra $\OO_n$ 
obtained from $\bbQ \langle \langle v_{ij}\; \mid \; 1\le i \ne j \le n \rangle \rangle$
by imposing the relations
\[
\left \{
\begin{array}{cll}
(I) & [v_{ik}, v_{jk}]= 0\, , & \forall ~1\le i\ne j \ne k\le n\, ; \\
(II) & [v_{ij}, v_{ik}+ v_{jk}]=0\, , & \forall ~1\le i\ne j \ne k\le n\, ; \\
(III) & [v_{ij}, v_{kl}]= 0\, , & \forall ~1\le i\ne j \ne k \ne l\le n\, .
\end{array}
\right .
\]
The {\em associated graded} oriented Artin Hopf algebra (with respect to the 
canonical complete filtration of $\OO_n$) is denoted by $\OO_n^* = \oplus_{k\ge 0} \OO_n^k$.
It is a Hopf algebra with grading, obtained as the quotient of $\bbT_{\bbQ} \langle v_{ij}\rangle$,
graded by tensor length, by the above relations (I)--(III).

The {\em oriented Artin Lie algebra} is the complete Lie algebra $\LL_n:= \Prim (\OO_n)$,
the quotient of $\widehat{\bbL} \langle v_{ij}\rangle$ by the relations (I)--(III).
The {\em associated graded} oriented Artin Lie algebra 
(with respect to the 
canonical complete filtration of $\LL_n$) is denoted by $\LL_n^* = \oplus_{k\ge 1} \LL_n^k$.
It is a Lie algebra with grading, obtained as the quotient of the free $\bbQ$--Lie algebra
$\bbL \langle v_{ij} \rangle$, graded by bracket length, by the relations (I)--(III).
\end{definition}

There is a natural left action of $\Sigma_n$ on the algebras $\OO_n= \UU \LL_n$ and $\LL_n$,
defined on generators by ${}^{\pi} v_{ij}= v_{\pi(i), \pi(j)}$, in exponential notation.

\begin{definition}
\label{def:braidoralg}
The {\em oriented braid algebra} $\OO_n \rtimes \bbQ [\Sigma_n]$ is the semidirect
algebra product, $\OO_n \otimes \bbQ [\Sigma_n]$, with respect to the above 
$\Sigma_n$--action on $\OO_n$ (where the twisted multiplication is as in Definition
\ref{def:braidalg}).
\end{definition}
 
The algebra $\OO_n \rtimes \bbQ [\Sigma_n]$ contains as a multiplicative subgroup 
the semidirect group product $\exp \LL_n \rtimes \Sigma_n$. We thus have 
split exact sequences of groups,
$$ 1\to \exp \LL_n \to \exp \LL_n \rtimes \Sigma_n \to \Sigma_n \to 1\, ,$$
together with compatible canonical embeddings,
$$\exp \LL_{n-1} \rtimes \Sigma_{n-1} \hookrightarrow  \exp \LL_n \rtimes \Sigma_n\, .$$

\subsection{}
\label{ss53}

We may now define our analogs of \Dd representations. Send 
$a_{ij}$ to $\exp (v_{ij})\in \exp \LL_n$,
for $1\le i\ne j\le n$. The defining Lie relations of $\LL_n$ from Definition \ref{def:inforalg}
readily imply that the defining group relations of $\BC_n$ from \eqref{eq:relmc} 
are respected. Consequently, we obtain a representation,
\begin{equation}
\label{eq:defr}
R_n \colon \BC_n \longrightarrow \exp \LL_n \subset \OO_n \, .
\end{equation}
It follows from \eqref{eq:bpsigma} and \eqref{eq:equiv} that
\begin{equation}
\label{eq:defrbp}
R_n \otimes \id \colon \BP_n \longrightarrow \exp \LL_n \rtimes \Sigma_n 
\subset \OO_n \rtimes \bbQ [\Sigma_n]
\end{equation}
is a representation of the braid-permutation group $\BP_n$ into the oriented braid algebra
$\OO_n \rtimes \bbQ [\Sigma_n]$, having the exponential property (E) from \S \ref{ss23}.

\subsection{}
\label{ss54}

As is well-known, the existence of representations, 
$\rho_n \colon \BB_n \to \AA_n \rtimes \bbQ [\Sigma_n]$, satisfying properties
(E), $(\Sigma)$ and (N) from \S \ref{ss23}, is intimately related to formality
properties of ordered configuration spaces of $\bbC$ and of their fundamental groups, $\PB_n$.
To obtain the same formality property for the McCool groups $\BC_n$, we turn to
a review of Malcev completion, following \cite[Appendix A]{Q}.

A {\em Malcev Lie algebra} is a rational Lie algebra $E$,
together with a complete, descending $\bbQ$-vector space filtration,
$\{F_rE\}_{r \ge 1}$, such that:
\begin{enumerate}
\item \label{eq:ml1}
$F_1E=E$;
\item \label{eq:ml2}
$[F_rE,F_sE]\subset F_{r+s}E$, for all $r$ and $s$;
\item \label{eq:ml3}
the associated graded Lie algebra, $\gr_F^*(E)=\bigoplus _{r\ge 1} 
F_rE/F_{r+1}E $,
is generated in degree~$*=1$.
\end{enumerate}
For example, the canonical complete filtration of $\LL_n$ makes it a Malcev Lie algebra,
with $\gr_F^* (\LL_n)=\LL_n^*$, as Lie algebras; see Definition \ref{def:inforalg}.

Let $G$ be a group.  The lower central series of $G$ is the sequence 
of normal subgroups $\{\Gamma_k G\}_{k\ge 1}$, defined inductively 
by $\Gamma_1 G=G$ and 
$\Gamma_{k+1}G =(\Gamma_k G,G)$. Observe that  
the successive quotients $\Gamma_k G/\Gamma_{k+1} G$ 
are abelian groups. The direct sum of these quotients, 
$\gr_{\Gamma}^*(G):=\oplus_{k\ge 1} 
\Gamma_k G/ \Gamma_{k+1} G$
is the {\em associated graded Lie algebra} of $G$. 
The Lie bracket is induced from the group commutator. Consequently,
$\gr_{\Gamma}^*(G)$ is generated as a Lie algebra by $\gr_{\Gamma}^1(G)$.

A group homomorphism, $\kappa\colon G\to \exp E$, where $E$ is a Malcev Lie algebra,
induces a degree zero morphism of Lie algebras, 
$\gr^*(\kappa)\colon \gr_{\Gamma}^*(G)\otimes \bbQ \rightarrow \gr_F^*(E)$. 
If $\gr^*(\kappa)$ is
an isomorphism, $\kappa$ is called a {\em Malcev completion} of $G$. There is a functorial
Malcev completion, $\kappa_G \colon G\to \exp E_G$, where $E_G$ is called the Malcev Lie
algebra of $G$. If $\kappa$ is another Malcev completion, there is an isomorphism of
complete Lie algebras, $f: E_G \stackrel{\sim}{\to} E$, such that 
$\kappa= \exp (f)\circ \kappa_G$. For example, the Malcev completion of $\bbF_n$, the
free group on $x_1, \dots, x_n$, is given by the tautological representation,
$\kappa_n\colon \bbF_n \to \exp \widehat{\bbL}\langle x_1, \dots, x_n \rangle$,
sending each $x_i$ to $\exp(x_i)$. 

Let $G= \langle x_1, \dots, x_n \mid w_1, \dots, w_m \rangle$ be a finitely presented group.
Define the Malcev Lie algebra $E$ to be the quotient of 
$\widehat{\bbL}\langle x_1, \dots, x_n \rangle$ obtained by imposing the relations
$\{ \log (\kappa_n(w_j))=0 \}$. It follows from \cite[Theorem 2.2]{Pa1} that
the tautological homomorphism,
\begin{equation}
\label{eq:fpmal}
\kappa\colon G\rightarrow \exp E\, ,
\end{equation}
is a Malcev completion.

Following D. Sullivan \cite{S}, we will say that a finitely presented group $G$
is {\em $1$--formal} if the Malcev Lie algebra $E_G$ is quadratic, that is,
obtainable from a free complete Lie algebra $\widehat{\bbL} \langle x_1, \dots, x_n \rangle$
by imposing homogeneous relations of bracket length two. 

The usual approach to the $1$-formality property and the construction of a Malcev
completion for pure braid groups uses the following ingredients. The starting remark is that
$\PB_n =\pi_1 F_n (\bbC)$, where $F_n (\bbC)$ denotes the ordered configuration space of
$n$ distinct points in $\bbC$. Next, note that $F_n (\bbC)$ is the complement to the
complex hyperplane arrangement of all diagonals $\{ z_i =z_j \}$ in $\bbC^n$. As such,
$F_n (\bbC)$ is a formal space in the sense of D. Sullivan \cite{S}, as follows from
basic results in arrangement theory, due to Orlik and Solomon \cite{OS}. Since the
fundamental group of a formal space is $1$-formal \cite{S}, $\PB_n$ is $1$-formal. 
Moreover, Chen's theory of iterated integrals \cite{Che} implies that the monodromy
representation of the canonical flat connection on the formal space $F_n (\bbC)$
is a Malcev completion homomorphism for $\PB_n$. Finally, it turns out that the KZ
connection coincides with the canonical flat connection. See also Kohno \cite{Kohn}.

We will take a completely different, much simpler, approach for the McCool groups
$\BC_n$. More precisely, we will exploit the particularly simple form of their
presentations \eqref{eq:relmc}. This will enable us to deduce the $1$-formality 
property, and to show that the explicit representations \eqref{eq:defr} are Malcev
completions, using only the definitions.

\subsection{}
\label{ss55}

It follows from the construction of the functorial Malcev completion $\kappa_G$
\cite[Appendix A]{Q} and a result on $I$--adic filtrations of group rings 
\cite[Proposition 2.2.1]{Che} that any Malcev completion $\kappa$ of a
{\em residually torsion-free nilpotent} group $G$ is faithful, if $G$ is
finitely generated. By definition, $G$ has the above residual property if all
non-trivial elements of $G$ are detected by homomorphisms $G\to N$, where $N$
is a torsion-free nilpotent group. Consequently, this property is inherited by
subgroups.

We are going to derive the faithfulness of our representations \eqref{eq:defr}
from a general result about residual torsion-free nilpotence of Torelli groups.
The {\em Torelli group} $T_G$ of a group $G$ is
\[
T_G := \{ a\in \Aut (G) \mid a\equiv \id \quad \mbox{mod} \quad \Gamma_2 G \}\, .
\]
It is endowed with the decreasing filtration 
\[
F_sT_G := \{ a\in \Aut (G) \mid a\equiv \id \quad \mbox{mod} \quad \Gamma_{s+1} G \}
\]
($s\ge 1$), which has the property that $\Gamma_s T_G \subset F_s T_G$, for all $s$.
 
\begin{prop}
[\cite{H}]
\label{prop:tore}
Assume $\cap_k \Gamma_k G= \{ 1\}$ and $\gr_{\Gamma}^*(G)$ is torsion-free.
Then the Torelli group $T_G$ is residually torsion-free nilpotent.
\end{prop}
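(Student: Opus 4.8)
The plan is to filter the Torelli group by the subgroups $F_s T_G$ introduced above, to identify the successive quotients with subgroups of Johnson-type homomorphism modules, and to read off residual torsion-free nilpotence from the two hypotheses. Before anything else I would record the basic degree-shifting property of the filtration: if $a \in F_r T_G$ and $g \in \Gamma_k G$, then $a(g) \equiv g \pmod{\Gamma_{k+r} G}$. This is proved by induction on $k$, the case $k=1$ being the definition of $F_r T_G$, and the inductive step resulting from expanding $a([x,y]) = [a(x), a(y)]$ with $a(x) = x u$, $a(y) = y v$ (where $u \in \Gamma_{k+r}G$, $v \in \Gamma_{r+1}G$) and using that $\Gamma_\bullet G$ is a central series. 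This refinement is the technical engine for the whole argument.

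Next I would introduce the Johnson homomorphism. For $a \in F_s T_G$ the assignment $g \mapsto a(g)g^{-1}$ lands in $\Gamma_{s+1}G$, and reducing modulo $\Gamma_{s+2}G$ gives a map $\tau_s(a)\colon G \to \gr_\Gamma^{s+1}(G)$. Using the degree-shifting property one checks that $\tau_s(a)$ is a homomorphism; since its target is abelian it factors through $\gr_\Gamma^1(G)$, and the assignment $a \mapsto \tau_s(a)$ is itself a group homomorphism
\[
\tau_s\colon F_s T_G \longrightarrow \mathrm{Hom}\bigl(\gr_\Gamma^1(G),\, \gr_\Gamma^{s+1}(G)\bigr)
\]
with kernel exactly $F_{s+1} T_G$. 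As $\gr_\Gamma^{s+1}(G)$ is torsion-free by hypothesis, so is the target, and hence each quotient $F_s T_G / F_{s+1} T_G$ is a torsion-free abelian group.

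I would then verify the two structural facts about the filtration. Triviality of the intersection is immediate: an automorphism in $\bigcap_s F_s T_G$ moves every $g$ by an element of $\bigcap_k \Gamma_k G = \{1\}$, so it is the identity. For the $N$-series property $[F_r T_G, F_s T_G] \subseteq F_{r+s} T_G$, I would take $a \in F_r T_G$, $b \in F_s T_G$ and estimate $(aba^{-1}b^{-1})(g)\,g^{-1}$ by a direct commutator computation, again fueled by the degree-shifting property, to see that it lies in $\Gamma_{r+s+1}G$. In particular $F_1 T_G = T_G$, so $\{F_s T_G\}$ is a central series and every $T_G / F_s T_G$ is nilpotent.

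Finally I would assemble the pieces. For each fixed $s$ the finite central series $\{F_r T_G / F_s T_G\}_{1 \le r \le s}$ has torsion-free abelian quotients by the second step; since a group possessing a finite central series with torsion-free quotients is itself torsion-free, the nilpotent group $T_G / F_s T_G$ is torsion-free. Combined with $\bigcap_s F_s T_G = \{1\}$, this means every nontrivial element of $T_G$ is detected in some torsion-free nilpotent quotient $T_G / F_s T_G$, which is exactly residual torsion-free nilpotence. I expect the main obstacle to be the $N$-series estimate $[F_r T_G, F_s T_G] \subseteq F_{r+s} T_G$; like the well-definedness of $\tau_s$, it rests entirely on the degree-shifting refinement from the first step, while the remaining bookkeeping is formal.
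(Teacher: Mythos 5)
Your proposal is correct, and it takes a genuinely different route from the paper's. The paper uses the hypothesis $\cap_k\Gamma_kG=\{1\}$ to reduce at once to the nilpotent case: any $\id\ne a\in T_G$ survives in some $T_{G_k}$, where $G_k=G/\Gamma_kG$ inherits torsion-freeness of the associated graded, and for nilpotent $G$ it shows $T_G$ itself is torsion-free nilpotent --- nilpotence from $F_{k-1}T_G=\{\id\}$ together with $\Gamma_sT_G\subseteq F_sT_G$, and torsion-freeness of the quotients $F_sT_G/F_{s+1}T_G$ by quoting \cite[Proposition 2.1]{Pa3}, which embeds them into the degree-$s$ derivations of $\gr_{\Gamma}^*(G)$. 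You never reduce to nilpotent $G$: you prove directly that $\{F_sT_G\}$ is an $N$-series on $T_G$ with trivial intersection and torsion-free abelian quotients, via the self-contained Johnson homomorphisms $\tau_s\colon F_sT_G\to \mathrm{Hom}\bigl(\gr_{\Gamma}^1(G),\gr_{\Gamma}^{s+1}(G)\bigr)$ with kernel exactly $F_{s+1}T_G$, and you detect nontrivial elements in the torsion-free nilpotent quotients $T_G/F_sT_G$. The two arguments are close relatives: $T_G/F_sT_G$ embeds into $T_{G_{s+1}}$ (an automorphism of $G$ induces the identity on $G/\Gamma_{s+1}G$ exactly when it lies in $F_sT_G$), and your $\tau_s$ is the degree-one restriction of the derivation embedding of \cite{Pa3}, injective precisely because $\gr_{\Gamma}^*(G)$ is generated in degree one. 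What your version buys is complete self-containedness --- no external citation, and no need to verify that the hypotheses descend to $G_k$ --- at the cost of proving the full estimate $[F_rT_G,F_sT_G]\subseteq F_{r+s}T_G$, of which only the central case $r=1$ is needed for nilpotence of the quotients; the paper's version is shorter given the quoted result. All your steps check out: the degree-shifting lemma (with the routine extension from commutator generators of $\Gamma_{k+1}G$ to arbitrary products and inverses), well-definedness and additivity of $\tau_s$ in $a$ (which indeed require degree shifting to control $a\bigl(b(g)g^{-1}\bigr)$), torsion-freeness of $\mathrm{Hom}\bigl(\gr_{\Gamma}^1(G),\gr_{\Gamma}^{s+1}(G)\bigr)$ from that of $\gr_{\Gamma}^{s+1}(G)$, and the standard fact that an extension of torsion-free groups is torsion-free, applied along the finite central series of $T_G/F_sT_G$.
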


\begin{proof}
The result is stated by Hain without proof in \cite[Section 14]{H}. For the
benefit of the reader, we are going to give a proof. By the first assumption on $G$,
any non-trivial element, $\id \ne a\in T_G$, is detected by the natural homomorphism,
$T_G \to T_{G_k}$, for some $k$, where $G_k:= G/\Gamma_k G$ is nilpotent and 
inherits the second hypothesis from $G$. It will be thus enough to assume that moreover
$G$ is nilpotent and to prove that in this case $T_G$ must be torsion-free nilpotent.
Nilpotence follows from the obvious fact that $F_{k-1}T_G =\{ \id \}$, if 
$\Gamma_k G= \{ 1\}$. Torsion-freeness may be verified by induction, as soon as 
we know that all quotients, $F_sT_G/F_{s+1}T_G$, are torsion-free. 
By \cite[Proposition 2.1]{Pa3}, the above Torelli filtration quotient embeds into
the degree $s$ derivations of the associated graded Lie algebra $\gr_{\Gamma}^*(G)$,
which has no torsion.
\end{proof}

Note that the groups $G$ from Proposition \ref{prop:tore} are themselves residually
torsion-free nilpotent. The hypotheses of the proposition are satisfied by free groups,
see \cite{MKS}. This implies the residual torsion-free nilpotence of $\BC_n \subset T_{\bbF_n}$;
see \eqref{eq:defbp} and \eqref{eq:bpsigma}. We may thus recover the well-known 
residual torsion-free nilpotence
of $\PB_n\subset \BC_n$. The proposition also applies to iterated semidirect products
of free groups  with trivial monodromy action in homology, e. g., fundamental groups 
of fiber-type arrangements of complex hyperplanes; see \cite{FR}.

\subsection{}
\label{ss56}

We are ready for the main result of this section.

\begin{theorem}
\label{thm:mcformal}
The McCool groups $\BC_n$ have the following properties.
\begin{enumerate}
\item \label{mc1}
The group $\BC_n$ is $1$-formal.
\item \label{mc2}
The Lie algebra with grading $\gr_{\Gamma}^*(\BC_n)\otimes \bbQ$ is isomorphic to the
Lie algebra $\LL_n^*$ from Definition {\em \ref{def:inforalg}}.
\item \label{mc3}
The homomorphism $R_n$ from \eqref{eq:defr} is a Malcev completion.
\item \label{mc4}
The above representation $R_n$ is faithful.
\end{enumerate}
\end{theorem}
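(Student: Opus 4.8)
The plan is to deduce all four statements from a single computation: the Malcev Lie algebra $E_{\BC_n}$ of the McCool group is isomorphic to the quadratic Lie algebra $\LL_n$ of Definition \ref{def:inforalg}, via the map sending each generator $a_{ij}$ to $\exp(v_{ij})$. Granting this, the four items follow quickly. Item \eqref{mc1} is immediate, since $\LL_n$ is quadratic by construction. Item \eqref{mc3} holds because the tautological Malcev completion \eqref{eq:fpmal} of $\BC_n$ then coincides, under this isomorphism, with $R_n$. Item \eqref{mc2} is a formal consequence of \eqref{mc3}: by the very definition of a Malcev completion, $\gr_{\Gamma}^*(\BC_n)\otimes\bbQ\cong\gr_F^*(E_{\BC_n})$, and $\gr_F^*(\LL_n)=\LL_n^*$ as recorded in \S\ref{ss54}. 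Finally, item \eqref{mc4} follows from \eqref{mc3} together with \S\ref{ss55}: $\BC_n$ is finitely generated and residually torsion-free nilpotent (as a subgroup of $T_{\bbF_n}$, by Proposition \ref{prop:tore} applied to the free group), and any Malcev completion of such a group is faithful.

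So the heart of the matter is computing $E_{\BC_n}$. By \eqref{eq:fpmal} applied to McCool's presentation \eqref{eq:relmc}, one has $E_{\BC_n}=\widehat{\bbL}\langle v_{ij}\rangle/J$, where $J$ is the closed ideal generated by the logarithms of the relators (I), (II), (III), each computed in $\widehat{\bbL}\langle v_{ij}\rangle$ under $a_{ij}\mapsto\exp(v_{ij})$. The elementary lemma I would isolate is: for $U,V$ in the positive-degree part of a complete free Lie algebra, $\log\bigl(\exp U\cdot\exp V\cdot\exp(-U)\cdot\exp(-V)\bigr)$ lies in the closed ideal generated by $[U,V]$, with leading term $[U,V]$. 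One proves this by passing to the quotient by that ideal, where $U$ and $V$ commute so the group commutator becomes trivial, and by reading the leading term off the Baker--Campbell--Hausdorff formula.

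Applying the lemma with $(U,V)$ equal to $(v_{ik},v_{jk})$, to $(v_{ij},\log(\exp v_{ik}\exp v_{jk}))$, and to $(v_{ij},v_{kl})$ for relators (I), (II), (III) respectively, the leading terms are exactly $[v_{ik},v_{jk}]$, $[v_{ij},v_{ik}+v_{jk}]$ and $[v_{ij},v_{kl}]$, i.e. the defining relations of $\LL_n$. Writing $J_0$ for the homogeneous ideal these quadratic elements generate, the lemma shows moreover that every relator lies in $J_0$: for (I) and (III) this is immediate, while for (II) one notes that modulo $J_0$ one has $\log(\exp v_{ik}\exp v_{jk})\equiv v_{ik}+v_{jk}$ by relation (I), so that the ideal generator $[v_{ij},\log(\exp v_{ik}\exp v_{jk})]$ reduces to relation (II). Hence $J\subseteq J_0$, yielding a surjection $q\colon E_{\BC_n}\twoheadrightarrow\LL_n$. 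To see that $q$ is an isomorphism I would compare associated graded Lie algebras: the ideal of initial forms $\gr J$ contains the initial forms of the generators of $J$, namely the quadratic relators, so $\gr E_{\BC_n}$ is a quotient of $\LL_n^*$; on the other hand $q$ induces a surjection $\gr E_{\BC_n}\twoheadrightarrow\gr\LL_n=\LL_n^*$. The composite $\LL_n^*\twoheadrightarrow\gr E_{\BC_n}\twoheadrightarrow\LL_n^*$ is the identity on generators, hence is the identity; thus both maps, and so $\gr(q)$, are isomorphisms. A filtered surjection between complete filtered Lie algebras that is an isomorphism on associated graded is itself an isomorphism, so $E_{\BC_n}\cong\LL_n$.

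The main obstacle is precisely this identification $E_{\BC_n}\cong\LL_n$. A priori the logarithms of the relators carry higher-order Baker--Campbell--Hausdorff corrections beyond their quadratic leading terms, and the danger is that these corrections impose genuinely new relations, which would break quadraticity and hence formality. The commutator lemma is exactly what controls this, confining every correction to the ideal already generated by the quadratic leading term; the associated-graded comparison then promotes the resulting surjection to an isomorphism. Once this is in place, faithfulness and the graded description are formal consequences, as explained in the first paragraph.
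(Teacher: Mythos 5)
Your proposal is correct, and its global architecture coincides with the paper's: compute the Malcev Lie algebra of $\BC_n$ from McCool's presentation \eqref{eq:relmc} via the tautological completion \eqref{eq:fpmal}, identify it with $\LL_n$, then deduce \eqref{mc1} from quadraticity, \eqref{mc2} from the defining property of Malcev completions, \eqref{mc3} by matching $R_n$ with the tautological completion on the generators $a_{ij}$ (the paper phrases this as checking $\gr^*(R_n)=\id$ on generators --- the same computation), and \eqref{mc4} from \S\ref{ss55} together with Proposition \ref{prop:tore} applied to $\bbF_n$. The one genuine divergence is how the crux --- converting the relators $\log(\kappa_n(w))$ into quadratic relations --- is handled. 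The paper invokes \cite[Lemma 2.5]{Pa2} as a black box giving an \emph{equality} of closed ideals, so that $\log\bigl((\exp(v_{ik}),\exp(v_{jk}))\bigr)$ is replaced outright by $[v_{ik},v_{jk}]$ and $\log\bigl((\exp(v_{ij}),\exp(v_{ik})\exp(v_{jk}))\bigr)$ by $[v_{ij},\log(\exp(v_{ik})\exp(v_{jk}))]$, after which relation (I$'$) collapses (II$'$) to $[v_{ij},v_{ik}+v_{jk}]$, and $E_{\BC_n}\cong\LL_n$ drops out with no further argument. You instead prove only the inclusion $J\subseteq J_0$ by an elementary BCH/quotient lemma (whose sketch is sound; note that in $\widehat{\bbL}\langle x,y\rangle$ the closed ideal generated by $[x,y]$ contains everything of degree $\ge 2$, so the containment is nearly formal and the real content is the leading-term identification), and then you upgrade the resulting surjection $E_{\BC_n}\twoheadrightarrow\LL_n$ to an isomorphism by an associated-graded comparison: the initial forms of the relators are the quadratic relations, the composite $\LL_n^*\twoheadrightarrow\gr E_{\BC_n}\twoheadrightarrow\LL_n^*$ is the identity on generators, and a filtered surjection of complete separated Lie algebras inducing an isomorphism on $\gr$ is an isomorphism. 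This costs an extra layer of filtered-algebra bookkeeping (including the standard identification of $\gr$ of a quotient by a closed ideal with the quotient by the ideal of initial forms, which you use tacitly) but buys self-containedness: you never need the reverse ideal inclusion supplied by the citation, only one inclusion plus leading-term control. Your reduction of $\log(\exp(v_{ik})\exp(v_{jk}))$ to $v_{ik}+v_{jk}$ modulo $J_0$ is exactly the paper's passage from (II$'$) to (II$''$), deployed one step earlier.
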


\begin{proof}
Part \eqref{mc1}. By \eqref{eq:fpmal} and \eqref{eq:relmc}, the Malcev Lie algebra of
$\BC_n$ is isomorphic to the quotient of 
$\widehat{\bbL}\langle v_{ij} \mid 1\le i\ne j\le n \rangle$
by the relations
\[
\left \{
\begin{array}{cll}
(I) & \log ((\exp (v_{ik}), \exp (v_{jk})))\, , & \forall ~1\le i\ne j \ne k\le n\, ; \\
(II) & \log ((\exp (v_{ij}), \exp (v_{ik})\cdot \exp (v_{jk})))\, , & \forall ~1\le i\ne j \ne k\le n\, ; \\
(III) & \log ((\exp (v_{ij}), \exp (v_{kl})))\, , & \forall ~1\le i\ne j \ne k \ne l\le n\, .
\end{array}
\right .
\]

Using \cite[Lemma 2.5]{Pa2}, the above relations may be replaced by
\[
\left \{
\begin{array}{cll}
(I') & [v_{ik}, v_{jk}]\, , & \forall ~1\le i\ne j \ne k\le n\, ; \\
(II') & [v_{ij}, \log (\exp (v_{ik})\cdot \exp (v_{jk}))]\, , & \forall ~1\le i\ne j \ne k\le n\, ; \\
(III') & [v_{ij}, v_{kl}]\, , & \forall ~1\le i\ne j \ne k \ne l\le n\, .
\end{array}
\right .
\]
Due to $(I')$, $(II')$ becomes
\[
(II'') \quad [v_{ij}, v_{ik}+ v_{jk}]\, .
\]
Since all relations $(I')$, $(II'')$ and $(III')$ are quadratic, we are done.

Part \eqref{mc2}. We have seen that $E_{\BC_n}\cong \LL_n$; see Definition \ref{def:inforalg}.
By the defining property of the Malcev completion of a group (see \S \ref{ss54}),
$\gr_{\Gamma}^*(\BC_n)\otimes \bbQ\cong \gr_F^*(\LL_n)= \LL_n^*$.

Part \eqref{mc3}. It is enough to verify that 
$\gr^*(R_n)=\id \colon \gr_{\Gamma}^*(\BC_n)\otimes \bbQ \rightarrow \LL_n^*$. Both
Lie algebras being generated in degree one, it suffices to check this on generators, i. e.,
to show that $R_n (a_{ij})\equiv_2 1+ v_{ij}$, for all $i\ne j$, which is clear from
the definition of $R_n$.

Part \eqref{mc4}. Follows from the residual torsion-free nilpotence of $\BC_n$; see
Proposition \ref{prop:tore} and the discussion of Torelli groups of free groups.
\end{proof}

\begin{remark}
\label{rk=subgr}
Unlike residual properties, $1$-formality is not necessarily inherited by subgroups.
For this reason, it seems hopeless to deduce from Theorem \ref{thm:mcformal} \eqref{mc1}
an alternative simple proof for $1$-formality of pure braid groups.
\end{remark}

\begin{remark}
\label{rem:upper}
The {\em upper-triangular} McCool groups, $\BC_n^+ \subset \BC_n$, were examined
in detail in \cite{CPVW}. It follows from \cite[Sections 4--5]{CPVW} that $\BC_n^+$
is generated by $\{ a_{ij}\mid n\ge i>j\ge 1\}$, with the following sublist of 
\eqref{eq:relmc} as defining relations:
\[
(I)\quad \mbox{with}\quad i,j>k\, ;\quad (II)\quad \mbox{with}\quad i>j>k\, ; 
\quad (III)\quad \mbox{with}\quad i>j, k>l\, .
\]

The proof of Theorem \ref{thm:mcformal} applies verbatim to these groups, and gives the 
following information. The group $\BC_n^+$ is $1$-formal. The Lie algebra with grading
$\gr_{\Gamma}^*(\BC_n^+)\otimes \bbQ$ is generated by 
$\{ v_{ij}\mid n\ge i>j\ge 1\}$, with defining relations as in Definition \ref{def:inforalg},
subject to the restrictions on indices described above. (Note that the authors of 
\cite{CPVW} obtain the same presentation for $\gr_{\Gamma}^*(\BC_n^+)$, over $\bbZ$, 
by using a different method.) A Malcev completion for $\BC_n^+$, $R_n^+$, may be obtained
by completing $\gr_{\Gamma}^*(\BC_n^+)\otimes \bbQ$ with respect to the degree
filtration, and then defining $R_n^+(a_{ij})= \exp (v_{ij})$, for $i>j$. The 
representation $R_n^+$ is faithful.
\end{remark}

\section{New universal representations and finite type invariants}
\label{sec:new}

We will show that the family of representations, 
$\{ R_n\otimes \id \colon \BP_n \to \OO_n \rtimes \bbQ [\Sigma_n] \}$,
constructed in \S \ref{ss53}, shares the essential properties of a \Dd family, 
$\{ \rho_n\colon \BB_n \to \AA_n \rtimes \bbK [\Sigma_n] \}$, listed in \S \ref{ss32}.
This leads to a geometric interpretation of the family $\{ R_n\otimes \id \}$,
in terms of finite type invariants for welded braids.

\subsection{}
\label{ss61}

Denote by $\rho_n'\colon \BB_n \to \OO_n \rtimes \bbQ [\Sigma_n]$ the restriction of
$R_n\otimes \id$ to $\BB_n$. Recall from \S\S \ref{ss51}--\ref{ss52} 
that one has projections, $\BP_n \twoheadrightarrow \Sigma_n$ and
$\exp \LL_n \rtimes \Sigma_n \twoheadrightarrow \Sigma_n$. One also has
inclusions, $\BP_{n-1}\hookrightarrow \BP_n$ and
$\exp \LL_{n-1} \rtimes \Sigma_{n-1} \hookrightarrow \exp \LL_n \rtimes\Sigma_n$.
So, it makes sense to speak about properties (E), $(\Sigma)$ and (S) from
\S \ref{ss23}, for $\{ R_n\otimes \id \}$ and $\{ \rho'_n \}$.

\begin{theorem}
\label{thm:newuniv}
Both families, $\{ R_n \otimes \id \}$ and $\{ \rho'_n \}$, have the following properties.
\begin{enumerate}
\item \label{n1}
They satisfy conditions {\em (E), $(\Sigma)$} and {\em (S)}.
\item \label{n2}
They consist of faithful representations.
\item \label{n3}
They are normalized by: $R_n \otimes \id (a_{ij}) \equiv_2 1+ v_{ij}$,
for $1\le i \ne j \le n$, respectively $\rho'_n \sigma_i =u_i \otimes s_i$, where
$u_i \in \exp \LL_n$ and $u_i \equiv_2 1+ v_{i, i+1}$, for $1\le i<n$.
\end{enumerate}
\end{theorem}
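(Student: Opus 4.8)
The plan is to reduce every assertion to generator-level checks, building on the structural results already in place, above all Theorem \ref{thm:mcformal}. The point to keep in mind throughout is that $R_n \otimes \id$ is a morphism of semidirect products $\BP_n = \BC_n \rtimes \Sigma_n \to \exp \LL_n \rtimes \Sigma_n$ which is the identity on the $\Sigma_n$-factor and is $R_n$ on $\BC_n$; almost everything follows by tracing this structure through the definitions.

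For part \eqref{n1}, property (E) is immediate: by construction \eqref{eq:defrbp} the image of $R_n \otimes \id$ already lies in $\exp \LL_n \rtimes \Sigma_n$, and $\rho'_n$ inherits this as a restriction. Property $(\Sigma)$ holds because the projection $\exp \LL_n \rtimes \Sigma_n \twoheadrightarrow \Sigma_n$ post-composed with $R_n \otimes \id$ recovers the projection $\BP_n \twoheadrightarrow \Sigma_n$ of \eqref{eq:bpsigma}, since $R_n \otimes \id$ respects the semidirect decomposition; for $\rho'_n$ I would verify this on the generators $\sigma_i = a_{i,i+1} s_i$ of \eqref{eq:sigmaas}, which map to $\exp(v_{i,i+1}) \otimes s_i$, whose $\Sigma_n$-component is $s_i$. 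Property (S) I would check on generators: under the embedding \eqref{eq:bpstab} one has $a_{ij} \mapsto a_{ij}$ and $s \mapsto s$, while the algebra embedding sends $v_{ij} \mapsto v_{ij}$, so that $R_{n-1}(a_{ij}) = \exp(v_{ij})$ and $R_n(a_{ij}) = \exp(v_{ij})$ make the stabilization squares commute; restricting to $\BB_n$ and invoking the compatible diagram \eqref{eq:stabcomp} gives (S) for $\rho'_n$.

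For part \eqref{n2}, faithfulness of $R_n$ on $\BC_n$ is exactly Theorem \ref{thm:mcformal}\eqref{mc4}. To promote this to $\BP_n$, suppose $(c,s) \in \BC_n \rtimes \Sigma_n$ lies in the kernel of $R_n \otimes \id$; comparing $\Sigma_n$-components forces $s = 1$, and then $R_n(c) = 1$ forces $c = 1$. Thus $R_n \otimes \id$ is faithful, and $\rho'_n$ is faithful as the restriction of a faithful representation to the subgroup $\BB_n \hookrightarrow \BP_n$. For part \eqref{n3}, the normalization is a one-line computation: $R_n \otimes \id(a_{ij}) = \exp(v_{ij}) \equiv_2 1 + v_{ij}$, while $\rho'_n \sigma_i = R_n \otimes \id(a_{i,i+1} s_i) = \exp(v_{i,i+1}) \otimes s_i$, so $u_i = \exp(v_{i,i+1}) \equiv_2 1 + v_{i,i+1}$.

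No step here carries genuine difficulty once Theorem \ref{thm:mcformal} is available; the substantive content — $1$-formality of $\BC_n$, the identification of its associated graded Lie algebra, and the faithfulness of $R_n$ — has already been extracted there. The only point deserving care is the bookkeeping behind part \eqref{n1}: one must confirm that the embeddings $\BP_{n-1} \hookrightarrow \BP_n$ and $\exp \LL_{n-1} \rtimes \Sigma_{n-1} \hookrightarrow \exp \LL_n \rtimes \Sigma_n$, together with the projections onto symmetric groups, are mutually compatible on generators, so that the diagrams demanded by (S) and $(\Sigma)$ commute exactly rather than only after passing to the relations of $\OO_n$.
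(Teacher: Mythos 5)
Your proposal is correct and takes essentially the same route as the paper, which handles part \eqref{n1} by citing the constructions of \S\S \ref{ss51}--\ref{ss53}, part \eqref{n2} by citing Theorem \ref{thm:mcformal}\eqref{mc4}, and part \eqref{n3} by the same computation via \eqref{eq:sigmaas}, giving $\rho'_n \sigma_i = \exp (v_{i,i+1}) \otimes s_i$. Your explicit kernel argument promoting faithfulness of $R_n$ to $R_n \otimes \id$ on $\BP_n = \BC_n \rtimes \Sigma_n$, and your generator-level verifications of $(\Sigma)$ and (S), simply spell out details the paper's terse proof leaves implicit.
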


\begin{proof}
Part \eqref{n1}. Property (E) was noticed at the end of \S \ref{ss53}. The other two
conditions are direct consequences of \eqref{eq:defrbp}, via our discussion 
of symmetry and stability morphisms from \S\S \ref{ss51}--\ref{ss52}.

Part \eqref{n2}. Follows from Theorem \ref{thm:mcformal}\eqref{mc4}.

Part \eqref{n3}. Use the definition of $R_n\otimes \id$ and $\rho'_n$, together with
\eqref{eq:sigmaas}.
\end{proof}

Thus, Theorem \ref{thm:intro2} from the Introduction is established.

\subsection{}
\label{ss62}

The {\em finite type} invariants for classical braids and links are defined by conditions
coming from the (iterated) application of a local move: exchanging negative and positive
crossings, in the associated diagram of a projection. 

The groups $\BP_n$ were given a geometric interpretation in \cite{FRR}. They may be
identified with {\em welded braid groups}, consisting of braids that may also have 
singular points (welds), modulo certain allowable moves. The natural local move in
this context is to replace a weld by a positive crossing.

More formally, let $J\subset \bbQ [\BP_n]$ be the two-sided ideal generated by
$\{ \sigma_i -s_i \}_{1\le i<n}$. The $J$--adic filtration $\{ J^k \}_{k\ge 0}$ will
then play the role of the {\em Vassiliev filtration} of classical braids. Let
$\{ F_k \OO_n \}_{k\ge 0}$ be the canonical complete filtration of $\OO_n$. 
Define on $\OO_n \rtimes \bbQ [\Sigma_n]$ the multiplicative filtration
$F_k := F_k \OO_n \otimes \bbQ [\Sigma_n]$, having the property that 
$F_k \cdot F_l\subset F_{k+l}$, for all $k,l$. View $R_n \otimes \id$ as an algebra map,
$R_n\otimes \id \colon \bbQ [\BP_n]\to \OO_n \rtimes \bbQ [\Sigma_n]$. Our
next result establishes that $R_n\otimes \id$ is a {\em universal invariant \` a la Vassiliev},
for welded braids.

A similar result is known for braids. Let $V \subseteq \bbQ [\BB_n]$ be the two-sided 
ideal generated by $\{ \sigma_i -\sigma_i^{-1}\}_{1\le i<n}$. Consider the $V$-adic filtration
$\{ V^k\}_{k\ge 0}$ on $\bbQ [\BB_n]$, and the multiplicative filtration
$\{ F_k \AA_n \otimes \bbQ [\Sigma_n]\}_{k\ge 0}$  on $\AA_n \rtimes \bbQ [\Sigma_n]$,
where $\{ F_k \AA_n \}$ is the canonical complete filtration of $\AA_n$. Extend the \Dd
representation $\rho_n$ to an algebra map, 
$\rho_n \colon \bbQ [\BB_n]\to \AA_n \rtimes \bbQ [\Sigma_n]$. Then $\rho_n$
respects filtrations and induces a 
degree zero multiplicative isomorphism at the associated graded level.

\begin{theorem}
\label{thm:welded}
The algebra map $R_n \otimes \id$ respects the above filtrations and induces a 
degree zero multiplicative isomorphism at the associated graded level,
$$\gr^*(R_n \otimes \id) \colon \gr^*_J (\BP_n) \stackrel{\sim}{\longrightarrow} 
\OO^*_n \rtimes \bbQ [\Sigma_n]\, .$$
\end{theorem}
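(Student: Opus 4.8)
The plan is to establish the graded isomorphism by first computing each side explicitly and then matching them through the leading-order behavior of $R_n\otimes\id$. The first step is to identify the target associated graded algebra. Since $F_k = F_k\OO_n\otimes\bbQ[\Sigma_n]$ is the tensor of the canonical complete filtration of $\OO_n$ with the (undeformed) group algebra, the associated graded of $\OO_n\rtimes\bbQ[\Sigma_n]$ with respect to $\{F_k\}$ is simply $\OO_n^*\rtimes\bbQ[\Sigma_n]$, where $\OO_n^*$ is the graded oriented Artin Hopf algebra from Definition \ref{def:inforalg}; the twisted multiplication survives to the graded level unchanged, since $\Sigma_n$ sits in degree zero.

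Next I would identify the source. Because $\BP_n=\BC_n\rtimes\Sigma_n$ splits and $J$ is generated by $\{\sigma_i-s_i\}=\{a_{i,i+1}s_i-s_i\}=\{(a_{i,i+1}-1)s_i\}$, one sees that $J$ is essentially the ideal generated by the augmentation-type elements $a_{ij}-1$ of $\BC_n$, twisted by the group-algebra structure of $\Sigma_n$. The associated graded $\gr_J^*(\BP_n)$ should therefore decompose as $\gr_I^*(\BC_n)\rtimes\bbQ[\Sigma_n]$, where $I\subset\bbQ[\BC_n]$ is the augmentation ideal and $\gr_I^*$ is the usual $I$-adic graded ring. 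The technical content is to verify that the $J$-adic filtration on $\bbQ[\BP_n]$ restricts, under the semidirect decomposition, to the $I$-adic filtration on the $\bbQ[\BC_n]$-factor, with $\Sigma_n$ contributing only in degree zero.

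The third step is to invoke the structural results already proved. By Theorem \ref{thm:mcformal}\eqref{mc3}, $R_n$ is a Malcev completion of $\BC_n$, so $\gr^*(R_n)\colon\gr_\Gamma^*(\BC_n)\otimes\bbQ\stackrel{\sim}{\to}\LL_n^*$ is an isomorphism. One then passes from the lower-central-series graded object to the $I$-adic graded ring of the group algebra: the standard comparison (via Quillen's identification of $\gr_I^*(\bbQ[G])$ with $\UU(\gr_\Gamma^*(G)\otimes\bbQ)$ for residually torsion-free nilpotent $G$) gives $\gr_I^*(\BC_n)\cong\UU(\LL_n^*)=\OO_n^*$. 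Combined with the normalization $R_n(a_{ij})\equiv_2 1+v_{ij}$ from Theorem \ref{thm:newuniv}\eqref{n3}, this shows $\gr^*(R_n\otimes\id)$ is the identity on generators and hence an isomorphism after tensoring with $\bbQ[\Sigma_n]$.

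The main obstacle I anticipate is the second step: carefully matching the $J$-adic filtration on the \emph{group ring} $\bbQ[\BP_n]$ with the tensor filtration on the target. The subtlety is that $\sigma_i-s_i$ is not simply $a_{i,i+1}-1$ but $(a_{i,i+1}-1)s_i$, so one must check that multiplying by the unit $s_i\in\bbQ[\Sigma_n]$ does not shift filtration degrees, and that powers $J^k$ are generated exactly by the degree-$k$ part of $I\cdot\bbQ[\Sigma_n]$. Establishing that $R_n\otimes\id$ respects the filtrations (i.e. $R_n\otimes\id(J^k)\subseteq F_k$) reduces to the leading term computation, but proving the \emph{reverse} containment needed for the filtration to be exhausted — equivalently, that no higher-order elements collapse — relies precisely on the Malcev completion property, which is why Theorem \ref{thm:mcformal} is the crucial input. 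Once the filtration bookkeeping is settled, the isomorphism at the graded level is a formal consequence.
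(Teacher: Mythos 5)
Your proposal is correct and follows essentially the same route as the paper: the leading-term computation $R_n\otimes\id(\sigma_i-s_i)=(\exp(v_{i,i+1})-1)\otimes s_i$ for filtration preservation, the identification of $\gr_J^*(\BP_n)$ with $\gr_I^*(\BC_n)\otimes\bbQ[\Sigma_n]$ via the splitting $\BP_n=\BC_n\rtimes\Sigma_n$ (the paper's Lemma \ref{lem:start}, with $J^k=BI^k=I^kB$ and conjugation by permutations to reduce all $a_{ij}$ to $a_{i,i+1}$, together with the map $\Psi$, carries out exactly the filtration bookkeeping you flag as the technical step), and the Malcev completion property of $R_n$ from Theorem \ref{thm:mcformal}\eqref{mc3} to obtain $\gr_I^*(\BC_n)\cong\OO_n^*$. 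The only cosmetic difference is that you invoke Quillen's identification $\gr_I^*(\bbQ[G])\cong\UU(\gr_{\Gamma}^*(G)\otimes\bbQ)$ directly (which in fact requires no residual torsion-free nilpotence hypothesis), whereas the paper reaches the same conclusion by citing Quillen's theory for the statement that a Malcev completion induces an isomorphism $\gr_I^*(\BC_n)\stackrel{\sim}{\to}\gr_F^*(\OO_n)$.
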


Thus, the dimensions of the vector spaces dual to $\OO_n^k \otimes \bbQ [\Sigma_n]$
(respectively $\AA_n^k \otimes \bbQ [\Sigma_n]$) may be interpreted as the maximal number
of linearly independent finite type weight systems for welded braids
(respectively braids) of degree $k$, for all $k$. These numbers are known for braids,
since Kohno computed in \cite{Kohn} the Hilbert series of $\AA_n^*$. In the case of
$\OO_n^*$, the determination of the Hilbert series seems more difficult than for $\AA_n^*$. 

The strategy of proving a similar result for $\BB_n$, see \cite[Theorem 1.1]{Pa4},
may be adapted to deduce the above theorem (actually, things are here easier than in
\cite{Pa4}, since $R_n\otimes \id$ is a multiplicative map). The starting point is 
the following.

\begin{lemma}
\label{lem:start}
Set $B= \bbQ [\BP_n]$ and let $I$ be the augmentation ideal of $\BC_n$. Then
$J^k= B I^k B= B I^k =I^k B$, for all $k$.
\end{lemma}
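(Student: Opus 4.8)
The plan is to reduce everything to the crossed-product structure of $B=\bbQ[\BP_n]$. Since $\BP_n=\BC_n\rtimes\Sigma_n$, one has $B=\bigoplus_{s\in\Sigma_n}\bbQ[\BC_n]\,s$, with skew relation $s\cdot c={}^{s}c\cdot s$ for $c\in\bbQ[\BC_n]$ and $s\in\Sigma_n$. The first thing I would record is that the equivariance relation \eqref{eq:equiv} makes the augmentation ideal $I\subset\bbQ[\BC_n]$, and hence each power $I^{k}$, invariant under the $\Sigma_n$-action, i.e. ${}^{s}(I^{k})=I^{k}$. From this and $sI^{k}={}^{s}(I^{k})s=I^{k}s$ one gets $\bbQ[\Sigma_n]\,I^{k}=I^{k}\,\bbQ[\Sigma_n]$, and then, using that $I^{k}$ is two-sided in $\bbQ[\BC_n]$ (so $\bbQ[\BC_n]\,I^{k}=I^{k}=I^{k}\,\bbQ[\BC_n]$), the three ``outer'' equalities become routine: I would check $BI^{k}=I^{k}B=BI^{k}B=I^{k}\,\bbQ[\Sigma_n]$, where the right-hand side means $\bigoplus_{s}I^{k}s$. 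The one point worth a line is that $I^{k}\,\bbQ[\Sigma_n]$ is genuinely a two-sided ideal of $B$, which follows from $\bbQ[\Sigma_n]\,I^{k}=I^{k}\,\bbQ[\Sigma_n]$ together with $\bbQ[\Sigma_n]\,\bbQ[\Sigma_n]=\bbQ[\Sigma_n]$.

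It then remains to identify $J^{k}$ with this common ideal, and I would first settle $k=1$. Using the embedding \eqref{eq:sigmaas}, $\sigma_i-s_i=(a_{i,i+1}-1)s_i$, and since $a_{i,i+1}-1\in I$, the generators of $J$ lie in $I\,\bbQ[\Sigma_n]=BIB$; hence $J\subseteq BIB$. For the reverse inclusion it suffices to prove $I\subseteq J$, and since $I$ is generated as an ideal of $\bbQ[\BC_n]$ by the elements $a_{ij}-1$, it is enough to place each such element in $J$. The adjacent ones are free: $a_{i,i+1}-1=(\sigma_i-s_i)s_i^{-1}\in J$ because $J$ is two-sided. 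For a general pair I would pick $s\in\Sigma_n$ carrying an adjacent pair $(k,k+1)$ to $(i,j)$ and use \eqref{eq:equiv} to write $a_{ij}-1=s\,(a_{k,k+1}-1)\,s^{-1}\in J$. Thus $I\subseteq J$, so $BIB\subseteq J$ and $J=BIB=I\,\bbQ[\Sigma_n]$. Finally I would bootstrap: from $J=I\,\bbQ[\Sigma_n]$ and the commutation $\bbQ[\Sigma_n]\,I=I\,\bbQ[\Sigma_n]$ one obtains $J^{k}=(I\,\bbQ[\Sigma_n])^{k}=I^{k}\,\bbQ[\Sigma_n]=BI^{k}B$, closing the chain.

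I expect the only genuine content to be the reverse inclusion $I\subseteq J$, specifically the production of the \emph{non-adjacent} generators $a_{ij}-1$: the embedding \eqref{eq:sigmaas} hands us directly only the adjacent conjugating automorphisms $a_{i,i+1}$, and it is the transitivity of the $\Sigma_n$-action on ordered pairs, via the equivariance \eqref{eq:equiv}, that recovers all of $I$ inside $J$. Everything else is bookkeeping in the crossed product; the one place I would stay careful is to note that the powers $I^{k}$ may be computed inside $\bbQ[\BC_n]$ or inside $B$ without ambiguity, since $I\subseteq\bbQ[\BC_n]$ and $\bbQ[\BC_n]$ is a subalgebra.
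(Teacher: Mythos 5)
Your proof is correct and follows essentially the same route as the paper: in both, the heart is $J=BIB$, obtained in one direction from $\sigma_i-s_i=(a_{i,i+1}-1)s_i$ via \eqref{eq:sigmaas}, and in the other by using $\Sigma_n$-conjugation \eqref{eq:equiv} to recover the non-adjacent generators $a_{ij}-1$ from the adjacent ones. The only divergence is that where the paper cites \cite[Lemma 2.1]{Pa4} for $BI^kB=BI^k=I^kB$ and leaves the bootstrap $J^k=BI^kB$ implicit, you derive these self-containedly from the decomposition $B=\bigoplus_{s\in\Sigma_n}\bbQ[\BC_n]\,s$ and the $\Sigma_n$-invariance of $I^k$ --- a fleshing-out of the cited details rather than a different method.
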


\begin{proof}
For the last two equalities, follow the proof of Lemma 2.1 from \cite{Pa4}. 
It remains to show that $J= BIB$. Since plainly $BIB$ is the kernel of
the algebra map $\bbQ [\BP_n]\twoheadrightarrow \bbQ [\Sigma_n]$ 
(see \eqref{eq:bpsigma}), we infer that $J\subset BIB$. To obtain the other
inclusion, it is enough to check that $a_{ij}\equiv 1$ mod $J$, for
$1\le i\ne j\le n$. Since $a_{i, i+1}= \sigma_i s_i^{-1}$, see \eqref{eq:sigmaas},
we may pick $s\in \Sigma_n$ fixing $i$ and sending $i+1$ to $j$, to deduce from
\eqref{eq:equiv} that $a_{ij}= s (\sigma_i s_i^{-1}) s^{-1}\equiv 1$ mod $J$.
\end{proof}

\subsection{}
\label{ss63}

{\em Proof of Theorem} \ref{thm:welded}. Checking that $R_n \otimes\id (J^k)\subset F_k$,
for all $k$, amounts to verifying that 
$R_n \otimes \id (\sigma_i -s_i)\in F_1\OO_n \otimes \bbQ [\Sigma_n]$, for $1\le i<n$.
By the definition of $R_n$ and \eqref{eq:sigmaas}, 
$R_n \otimes \id (\sigma_i -s_i)= (\exp (v_{i, i+1})-1)\otimes s_i$, which proves
Theorem \ref{thm:welded}, except for the fact that each map $\gr^k(R_n \otimes \id)$
is a $\bbQ$-linear isomorphism.

To finish the proof, we introduce an intermediate object, namely the graded vector 
space $\gr_I^*(\BC_n):= \oplus_{k\ge 0} I^k/I^{k+1}$. Since $R_n :\BC_n \to \exp \LL_n$
is a Malcev completion, by Theorem \ref{thm:mcformal}\eqref{mc3}, the general theory
from \cite{Q} guarantees the fact that the induced map, 
$\gr^*(R_n)\colon \gr^*_I(\BC_n)\to \gr^*_F(\UU \LL_n =\OO_n)$ is an isomorphism.
Hence, we obtain a degree zero isomorphism,
$$\gr^*(R_n)\otimes \id \colon \gr^*_I(\BC_n)\otimes \bbQ [\Sigma_n]
\stackrel{\sim}{\longrightarrow} \OO_n^* \otimes \bbQ [\Sigma_n]\, .$$

The exact sequence \eqref{eq:bpsigma} gives rise to a vector space isomorphism,
$\Psi \colon \bbQ [\BC_n]\otimes \bbQ [\Sigma_n]\stackrel{\sim}{\rightarrow}
\bbQ [\BP_n]$, defined by $\Psi (c\otimes s)= c\cdot s$, for $c\in \BC_n$ and $s\in \Sigma_n$.
Due to Lemma \ref{lem:start}, the argument from \cite[\S 2.2]{Pa4} shows
that $\Psi$ identifies the filtrations $\{ I^k \otimes \bbQ [\Sigma_n] \}$ and
$\{ J^k \}$. Consequently, we obtain another degree zero isomorphism,
\[
\gr^*(\Psi) \colon \gr_I^*(\BC_n)\otimes \bbQ [\Sigma_n]\stackrel{\sim}{\longrightarrow}
\gr_J^* (\BP_n)\, .
\]

We finish by showing that 
$\gr^*(R_n \otimes \id) \circ \gr^*(\Psi)= \gr^*(R_n) \otimes \id$, which
follows at once from the definition of $R_n \otimes \id$ and $\Psi$.
\hfill $\square$

This completes the proof of Theorem \ref{thm:intro3} from the Introduction.

Since plainly $V^k \subseteq J^k$, for all $k$, we infer that the representations $\rho_n'$
(obtained by restricting $R_n \otimes \id$ to $\bbQ [\BB_n]$) give rise to 
finite type invariants for braids. At the associated graded level, the graded
algebra map $\gr_V^* (\BB_n) \to \gr_J^* (\BP_n)$, induced by the inclusion
$\BB_n \hookrightarrow \BP_n$, may be identified, via the isomorphisms provided by
$\rho_n$ and $R_n \otimes \id$, with 
$\delta_n \otimes \id : \AA_n^* \rtimes \bbQ [\Sigma_n] \to \OO_n^* \rtimes \bbQ [\Sigma_n]$.
Here, the Hopf algebra map $\delta_n : \AA_n^* \to \OO_n^*$ sends the generator $t_{ij}$
to $v_{ij}+ v_{ji}$, for any $1\le i\ne j\le n$. 

The injectivity of $\delta_n$ is equivalent to the fact that all finite type invariants
for $n$-braids come from welded braids. We have checked that this holds for $n\le 3$.
It would be interesting to know what happens in general.

\begin{ack}
We are grateful to Dan Cohen, who directed us to the work done in \cite{MC} and
\cite{CPVW}, and raised the $1$--formality question for (upper-triangular) McCool groups.
\end{ack}

\end{document}